\DeclareMathOperator{\argmax}{arg\,max}
\newtheorem{theorem}{Theorem}
\newtheorem{example}{Example}
\newtheorem{assumption}{Assumption}
\newtheorem{remark}{Remark}
\newtheorem{lemma}{Lemma}
\newtheorem{definition}{Definition}
\newtheorem{proposition}{Proposition}
\title{A Discrete-Time Switching System Analysis of Q-learning}
\author{Donghwan Lee, Jianghai Hu,  Niao He
\thanks{D. Lee is with the Department of Electrical and Engineering, Korea Advanced Institute of Science and Technology (KAIST), Daejeon, 34141, South Korea {\tt\small
donghwan@kaist.ac.kr}.}%
\thanks{J. Hu is with the Department of Electrical and Computer Engineering,
Purdue University, West Lafayette, IN 47906, USA {\tt\small
jianghai@purdue.edu}.}
\thanks{N. He is with the Department of Computer Science,
ETH Z\"{u}rich, 8092 Z\"{u}rich, Switzerland {\tt\small
niao.he@inf.ethz.ch}.}
}
\begin{document}

\maketitle

\begin{abstract}
This paper develops a novel control-theoretic framework to analyze the non-asymptotic convergence of Q-learning. We show  that the dynamics of asynchronous Q-learning with a constant step-size can be naturally formulated as a discrete-time stochastic affine switching system. Moreover, the evolution of the Q-learning estimation error is over- and underestimated by trajectories of two simpler dynamical systems. Based on these two systems, we derive a new finite-time error bound of asynchronous Q-learning when a constant stepsize is used. Our analysis also sheds light on the overestimation phenomenon of Q-learning. We further illustrate and validate the analysis through numerical simulations.

\end{abstract}
\begin{IEEEkeywords}
Reinforcement learning, Q-learning, switching system, control theory, finite-time analysis
\end{IEEEkeywords}

\section{Introduction}
Q-learning, first introduced by Watkins~\cite{watkins1992q}, is one of the most fundamental and important reinforcement learning algorithms. The theoretical behavior of Q-learning has been extensively studied over the years. Classical analysis of Q-learning mostly focused on asymptotic convergence of asynchronous Q-learning~\cite{tsitsiklis1994asynchronous,jaakkola1994convergence} and synchronous Q-learning~\cite{borkar2000ode}. Substantial advances have been made recently in the guarantee of their finite-time convergence; see \cite{szepesvari1998asymptotic,kearns1999finite,even2003learning, azar2011speedy,beck2012error,wainwright2019stochastic,qu2020finite,li2020sample,chen2021lyapunov}.

To list a few,  Szepesv\'{a}ri in~\cite{szepesvari1998asymptotic} gave the first non-asymptotic  analysis of asynchronous Q-learning under an i.i.d. sampling setting.  ~\cite{even2003learning} first provided the non-asymptotic analysis for both synchronous and asynchronous Q-learning with polynomial and linear step-sizes under a single trajectory Markovian sampling setting.
Recently, \cite{wainwright2019stochastic} established the best known bound for synchronous Q-learning under a rescaled linear step-size. In a subsequent work, ~\cite{qu2020finite}  derived a matching bound for asynchronous Q-learning under the Markovian setting using a similar decaying step-size. The sample complexity is further improved with a refined analysis based on constant step-size in \cite{li2020sample} and \cite{chen2021lyapunov}.

Existing results for the most part treat the Q-learning dynamics as a special case of general nonlinear stochastic approximation schemes with Markovian noise. In a different line of work, \cite{lee2020unified} discovered the close connection between Q-learning and continuous-time switching systems. The switching system perspective captures unique features of Q-learning dynamics and  encapsulates a wide spectrum of Q-learning algorithms including asynchronous Q-learning, averaging Q-learning~\cite{lee2020unified}, and Q-learning with function approximation, etc. However, existing O.D.E. analysis of such continuous-time switching systems  yields only asymptotic convergences of Q-learning algorithms and requires diminishing step-sizes. Obtaining a finite-time convergence analysis would require a departure of the switching systems from continuous-time domain to discrete-time domain, which remains an open and challenging question.

In this paper, we aim to close this gap and provide a new finite-time error bound of Q-learning through the lens of discrete-time switching systems. In particular, we focus on asynchronous Q-learning with constant step-sizes for solving a discounted Markov decision process with finite state and action spaces. We first show that asynchronous Q-learning with a constant step-size can be naturally formulated as a stochastic discrete-time affine switching system. This allows us to transform the convergence analysis into a stability analysis of the switching system. However, its stability analysis is nontrivial due to the presence of the affine term and the noise term. The main breakthrough in our analysis lies in developing upper and lower comparison systems whose trajectories over- and underestimate the original system's trajectory. The lower comparison system is a stochastic linear system, while the upper comparison system is a stochastic linear switching system~\cite{lin2009stability}, both of which have much simpler structure than the original system or general nonlinear systems. Our finite-time error bound of Q-learning follows immediately by combining the error bounds of the stochastic linear system (i.e., lower comparison system, which has no affine term) and the error system (i.e., difference of the two comparison systems, which has no noise term). Comparing to existing analysis based on nonlinear stochastic approximation schemes, our analysis seems more intuitive and builds on simple systems. It also sheds new light on the overestimation phenomenon in Q-learning due to the maximization bias~\cite{hasselt2010double}.

Lastly, we emphasize that our goal is to provide new insights and analysis framework to lay out a strong theoretical foundation for Q-learning via its unique connection to discrete-time switching systems, rather than improving existing convergence rates. In particular, as opposed to classical ordinary differential equation analysis/stochastic approximation approaches, the proposed strategy adopts the idea of formulating Q-learning algorithm as a stochastic affine switching system, and directly conduct analysis in discrete-time, which is new in the literature. The switching system model of Q-learning in this paper allows us to already well-established tools in control theory such as Lyapunov analysis, which make the analysis easier and more familiar to researchers in control community. Therefore, we expect that, such control-theoretic analysis could promote more research activities of people with control backgrounds for reinforcement learning, further stimulate the synergy between control theory and reinforcement learning, and open up opportunities to the design of new reinforcement learning algorithms and refined analysis for Q-learning such as double Q-learning~\cite{hasselt2010double}, distributed Q-learning~\cite{kar2013cal}, and speedy Q-learning~\cite{azar2011speedy}.

Moreover, the proposed analysis follows a particularly clean and simple strategy. The core idea that leads to the simplicity is identifying two simpler dynamical systems: `lower comparison system' that is a stochastic linear system and `upper comparison system' that is a stochastic switching system, which have favorable structures that are easily understood and analyzed via control theory: stability of a linear system can be used to derive a finite-time error bound for Q-learning. Therefore, we believe that the convergence analysis of Q-learning can become more familiar to more people including researchers in control theory. Overall, we view our analysis technique as a complement rather than a replacement of existing techniques for Q-learning analysis. Moreover, our approach based on the comparison systems could be of independent interest to the finite-time stability analysis of more general switching systems.

The overall paper consists of the following parts:~\cref{sec:preliminaries} provides preliminary discussions including basics of Markov decision process, switching system, Q-learning, and useful definitions and notations used throughout the paper.~\cref{sec:convergence} provides the main results of the paper, including the switched system models of Q-learning, upper and lower comparison systems, and the finite-time error bounds. We conclude in ~\cref{sec:conclusion} with a discussion on potential extensions of this work.
\section{Preliminaries}\label{sec:preliminaries}

\subsection{Markov decision problem}
We consider the infinite-horizon discounted Markov decision problem (MDP), where the agent sequentially takes actions to maximize cumulative discounted rewards. In a Markov decision process with the state-space ${\cal S}:=\{ 1,2,\ldots ,|{\cal S}|\}$ and action-space ${\cal A}:= \{1,2,\ldots,|{\cal A}|\}$, the decision maker selects an action $a \in {\cal A}$ with the current state $s$, then the state
transits to a state $s'$ with probability $P(s'|s,a)$, and the transition incurs a
reward $r(s,a,s')$.
For convenience, we consider a deterministic reward function and simply write
$
r(s_k,a_k ,s_{k + 1}) =:r_k,\quad k \in \{ 0,1,\ldots \}.
$

A deterministic policy, $\pi :{\cal S} \to {\cal A}$, maps a state $s \in {\cal S}$ to an action $\pi(s)\in {\cal A}$. The objective of the Markov decision problem (MDP) is to find a deterministic optimal policy, $\pi^*$, such that the cumulative discounted rewards over infinite time horizons is
maximized, i.e.,
\begin{align*}
\pi^*:= \argmax_{\pi\in \Theta} {\mathbb E}\left[\left.\sum_{k=0}^\infty {\gamma^k r_k}\right|\pi\right],
\end{align*}
where $\gamma \in [0,1)$ is the discount factor, $\Theta$ is the set of all admissible deterministic policies, $(s_0,a_0,s_1,a_1,\ldots)$ is a state-action trajectory generated by the Markov chain under policy $\pi$, and ${\mathbb E}[\cdot|\pi]$ is an expectation conditioned on the policy $\pi$. The Q-function under policy $\pi$ is defined as
\begin{align*}
&Q^{\pi}(s,a)={\mathbb E}\left[ \left. \sum_{k=0}^\infty {\gamma^k r_k} \right|s_0=s,a_0=a,\pi \right],\quad s\in {\cal S},a\in {\cal A},
\end{align*}
and the optimal Q-function is defined as $Q^*(s,a)=Q^{\pi^*}(s,a)$ for all $s\in {\cal S},a\in {\cal A}$. Once $Q^*$ is known, then an optimal policy can be retrieved by the greedy policy $\pi^*(s)=\argmax_{a\in {\cal A}}Q^*(s,a)$. Throughout, we assume that the MDP is ergodic so that the stationary state distribution exists and the Markov decision problem is well posed.

\subsection{Switching system}

Since the switching system is a special form of nonlinear systems, we first consider the nonlinear system
\begin{align}
x_{k+1}=f(x_k),\quad x_0=z \in {\mathbb R}^n,\quad k\in \{1,2,\ldots \},\label{eq:nonlinear-system}
\end{align}
where $x_k\in {\mathbb R}^n$ is the state and $f:{\mathbb R}^n \to {\mathbb R}^n$ is a nonlinear mapping. An important concept in dealing with the nonlinear system is the equilibrium point. A point $x=x^*$ in the state-space is said to be an equilibrium point of~\eqref{eq:nonlinear-system} if it has the property that whenever the state of the system starts at $x^*$, it will remain at $x^*$~\cite{khalil2002nonlinear}. For~\eqref{eq:nonlinear-system}, the equilibrium points are the real roots of the equation $f(x) = x$. The equilibrium point $x^*$ is said to be globally asymptotically stable if for any initial state $x_0 \in {\mathbb R}^n$, $x_k \to x^*$ as $k \to \infty$.

Next, let us consider the particular nonlinear system, the \emph{linear switching system},
\begin{align}
&x_{k+1}=A_{\sigma_k} x_k,\quad x_0=z\in {\mathbb
R}^n,\quad k\in \{0,1,\ldots \},\label{eq:switched-system}
\end{align}
where $x_k \in {\mathbb R}^n$ is the state, $\sigma\in {\mathcal M}:=\{1,2,\ldots,M\}$ is called the mode, $\sigma_k \in
{\mathcal M}$ is called the switching signal, and $\{A_\sigma,\sigma\in {\mathcal M}\}$ are called the subsystem matrices. The switching signal can be either arbitrary or controlled by the user under a certain switching policy. Especially, a state-feedback switching policy is denoted by $\sigma_k = \sigma(x_k)$. A more general class of systems is the {\em affine switching system}
\begin{align*}
&x_{k+1}=A_{\sigma_k} x_k + b_{\sigma_k},\quad x_0=z\in {\mathbb
R}^n,\quad k\in \{0,1,\ldots \},
\end{align*}
where $b_{\sigma_k} \in {\mathbb R}^n$ is the additional input vector, which also switches according to $\sigma_k$. Due to the additional input $b_{\sigma_k}$, its stabilization becomes much more challenging.

\subsection{Revisit Q-learning}
We now briefly review the standard Q-learning and its convergence. Recall that the standard Q-learning updates
\begin{equation*}
\begin{split}
&Q_{k+1}(s_k,a_k)=Q_k(s_k,a_k)\\
&+ \alpha_k(s_k,a_k)\left\{r_k +\gamma \max_{u \in {\cal A}} Q_k (s_{k+1},u)-Q_k(s_k,a_k)\right\},
\end{split}
\end{equation*}
where  $0 \le \alpha_k(s,a) \le 1$ is called the learning rate or step-size associated with the state-action pair $(s,a)$ at iteration $k$. This value is assumed to be zero if
$(s,a)\ne (s_k,a_k)$. If
\begin{align*}
&\sum_{k=0}^\infty{\alpha_k(s,a)}= \infty ,\quad \sum_{k=0}^\infty {\alpha_k^2 (s,a)}<\infty,
\end{align*}
and every state-action pair is visited infinitely often, then the iterate is guaranteed to converge to $Q^*$ with probability one~\cite{sutton1998reinforcement}. Note that the state-action can be visited arbitrarily, which is more general than stochastic visiting rules.

In this paper, we focus on the following setting: $\{(s_k,a_k)\}_{k=0}^{\infty}$ is an i.i.d. samples under a behavior policy $\beta$, where the behavior policy is the policy by which the reinforcement learning agent actually behaves to collect experiences. For simplicity, we assume that the state at each time is sampled from the state distribution $p$, and in this case, the state-action distribution at each time is identically given by
\[
d(s,a) = p (s)\beta (a|s),\quad (s,a) \in {\cal S} \times {\cal A}.
\]

\subsection{Assumptions and definitions}

Throughout, we make the following standard assumptions.
\begin{assumption}\label{assumption:positive-distribution}
$d(s,a)> 0$ holds for all $s\in {\cal S},a \in {\cal A}$.
\end{assumption}
\begin{assumption}\label{assumption:step-size}
The step-size is a constant $\alpha \in (0,1)$.
\end{assumption}
\begin{assumption}\label{assumption:bounded-reward} The reward is bounded as follows:
\begin{align*}
\max _{(s,a,s') \in {\cal S} \times {\cal A} \times {\cal S}} |r (s,a,s')| =:R_{\max}\leq 1.
\end{align*}
\end{assumption}
\begin{assumption}\label{assumption:bounded-Q0} The initial iterate $Q_0$ satisfies $\|Q_0\|_\infty \le 1$.
\end{assumption}
\begin{remark}
All the assumptions are standard and widely used in the reinforcement learning literature. All these assumptions will be used throughout this paper for the convergence proofs. \cref{assumption:positive-distribution} guarantees that every state-action pair is visited infinitely often with probability one for sufficient exploration. This assumption corresponds to the sufficient exploration condition in the standard Q-learning analysis~\cite{jaakkola1994convergence}: every state-action pair $(s,a)$ is visited infinitely often. Moreover, this assumption is used when the state-action visit distribution is given. It has been also considered in~\cite{li2020sample} and~\cite{chen2021lyapunov}. The work in~\cite{beck2012error} considers another exploration condition, called the cover time condition, which states that there is a certain time period, within which all the state-action pair is expected to be visited at least once. Slightly different cover time conditions have been used in~\cite{even2003learning} and~\cite{li2020sample} for convergence rate analysis.
\cref{assumption:bounded-reward} is required to ensure the boundedness of Q-learning iterates, which is applied in almost all reinforcement learning algorithms. The unit bounds imposed on $R_{\max}$ and $Q_0$ are just for simplicity of analysis. The constant step-size in~\cref{assumption:step-size} has been also studied in~\cite{beck2012error} and~~\cite{chen2021lyapunov} using different approaches.
\end{remark}

The following quantities will be frequently used in this paper; hence, we define them for convenience.
\begin{definition}
\begin{enumerate}
\item Maximum state-action visit probability:
\[
d_{\max} := \max_{(s,a)\in {\cal S} \times {\cal A}} d(s,a) \in (0,1).
\]

\item Minimum state-action visit probability:
\[
d_{\min}:= \min_{(s,a) \in {\cal S} \times {\cal A}} d(s,a) \in (0,1).
\]

\item Exponential decay rate:
\begin{align}\label{eq:rho}
    \rho:=1 - \alpha d_{\min} (1-\gamma).
\end{align}
Under~\cref{assumption:step-size}, the decay rate satisfies $\rho \in (0,1)$.
\end{enumerate}
\end{definition}

Throughout the paper, we will use the following compact notations for dynamical system representations:
\begin{align}
P:=& \begin{bmatrix}
   P_1\\
   \vdots\\
   P_{|{\cal A}|}\\
\end{bmatrix},\; R:= \begin{bmatrix}
   R_1 \\
   \vdots \\
   R_{|{\cal A}|} \\
\end{bmatrix},
\; Q:= \begin{bmatrix}
   Q(\cdot,1)\\
  \vdots\\
   Q(\cdot,|{\cal A}|)\\
\end{bmatrix},\nonumber\\
D_a:=& \begin{bmatrix}
   d(1,a) & & \\
   & \ddots & \\
   & & d(|{\cal S}|,a)\\
\end{bmatrix},
\; D:= \begin{bmatrix}
   D_1 & & \\
    & \ddots  & \\
    & & D_{|{\cal A}|} \\
\end{bmatrix},\label{eq:8}
\end{align}
where $P_a = P(\cdot,a,\cdot)\in {\mathbb R}^{|{\cal S}| \times |{\cal S}|}$, $Q(\cdot,a)\in {\mathbb R}^{|{\cal S}|},a\in {\cal A}$ and $R_a(s):={\mathbb E}[r(s,a,s')|s,a]$.
Note that $P\in{\mathbb R}^{|{\cal S}||{\cal A}| \times |{\cal S}|  }$, $R \in {\mathbb R}^{|{\cal S}||{\cal A}|}$, $Q\in {\mathbb R}^{|{\cal S}||{\cal A}|}$, and $D\in {\mathbb R}^{|{\cal S}||{\cal A}| \times |{\cal S}||{\cal A}|}$.
In this notation, the Q-function is encoded as a single vector $Q \in {\mathbb R}^{|{\cal S}||{\cal A}|}$, which enumerates $Q(s,a)$ for all $s \in {\cal S}$ and $a \in {\cal A}$. The single value $Q(s,a)$ can be written as
$
Q(s,a) = (e_a  \otimes e_s )^T Q,
$
where $e_s \in {\mathbb R}^{|{\cal S}|}$ and $e_a \in {\mathbb R}^{|{\cal A}|}$ are $s$-th basis vector (all components are $0$ except for the $s$-th component which is $1$) and $a$-th basis vector, respectively. Note also that under~\cref{assumption:positive-distribution}, $D$ is a nonsingular diagonal matrix with strictly positive diagonal elements.

For any stochastic policy, $\pi:{\cal S}\to \Delta_{|{\cal A}|}$, where $\Delta_{|{\cal A}|}$ is the set of all probability distributions over ${\cal A}$, we define the corresponding action transition matrix as
\begin{align}
\Pi^\pi:=\begin{bmatrix}
   \pi(1)^T \otimes e_1^T\\
   \pi(2)^T \otimes e_2^T\\
    \vdots\\
   \pi(|S|)^T \otimes e_{|{\cal S}|}^T \\
\end{bmatrix}\in {\mathbb R}^{|{\cal S}| \times |{\cal S}||{\cal A}|},\label{eq:swtiching-matrix}
\end{align}
where $e_s \in {\mathbb R}^{|{\cal S}|}$.
Then, it is well known that
$
P\Pi^\pi \in {\mathbb R}^{|{\cal S}||{\cal A}| \times |{\cal S}||{\cal A}|}
$
is the transition probability matrix of the state-action pair under policy $\pi$.
If we consider a deterministic policy, $\pi:{\cal S}\to {\cal A}$, the stochastic policy can be replaced with the corresponding one-hot encoding vector
$
\vec{\pi}(s):=e_{\pi(s)}\in \Delta_{|{\cal A}|},
$
where $e_a \in {\mathbb R}^{|{\cal A}|}$, and the corresponding action transition matrix is identical to~\eqref{eq:swtiching-matrix} with $\pi$ replaced with $\vec{\pi}$. For any given $Q \in {\mathbb R}^{|{\cal S}||{\cal A}|}$, denote the greedy policy w.r.t. $Q$ as
\begin{align}
\pi_Q(s):=\argmax_{a\in {\cal A}} Q(s,a)\in {\cal A}.\label{eq:greedy}
\end{align}

We will use the following shorthand frequently:
\begin{align*}
\Pi_Q:=\Pi^{\pi_Q}.
\end{align*}
We note that this notation, $\Pi_Q:=\Pi^{\pi_Q}$, will play an important role in the derivation of the switching system model in this paper. In particular, the matrix appears in the system parameters, and switches as the greedy policy $\pi_Q(s):=\argmax_{a\in {\cal A}} Q(s,a)\in {\cal A}$ is changed according to $Q$.

The boundedness of Q-learning iterates~\cite{gosavi2006boundedness} plays an important role in our analysis.
\begin{lemma}[Boundedness of Q-learning iterates~\cite{gosavi2006boundedness}]\label{lemma:bounded-Q}
If the step-size is less than one, then for all $k \ge 0$,
\begin{align*}
\|Q_k\|_\infty \le Q_{\max}:= \frac{\max \{R_{\max},\max_{(s,a)\in {\cal S} \times {\cal A}} Q_0 (s,a)\}}{1-\gamma}.
\end{align*}
From~\cref{assumption:bounded-reward} and~\cref{assumption:bounded-Q0}, we can easily see that $Q_{\max}\leq\frac{1}{1-\gamma}$.
\end{lemma}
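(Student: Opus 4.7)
The plan is to proceed by induction on the iteration index $k$, exploiting the fact that under the assumption $\alpha\in(0,1)$ the Q-learning update on the visited entry $(s_k,a_k)$ is a convex combination of the current estimate and a bootstrapped target. For the base case $k=0$, the bound $\|Q_0\|_\infty\le Q_{\max}$ follows directly from the definition of $Q_{\max}$ together with $1/(1-\gamma)\ge 1$, so that $\max_{(s,a)}Q_0(s,a)\le Q_{\max}$ (and the symmetric lower bound is handled analogously using $R_{\max}$ as the dominating term).

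For the inductive step, assume $\|Q_k\|_\infty\le Q_{\max}$. Entries $(s,a)\ne(s_k,a_k)$ are not updated, hence $Q_{k+1}(s,a)=Q_k(s,a)$ and the bound is preserved trivially. For the visited entry, I would rewrite the update as
\begin{align*}
Q_{k+1}(s_k,a_k) = (1-\alpha_k(s_k,a_k))\,Q_k(s_k,a_k) + \alpha_k(s_k,a_k)\bigl(r_k+\gamma\max_{u\in\mathcal A}Q_k(s_{k+1},u)\bigr),
\end{align*}
which by $\alpha_k(s_k,a_k)\in[0,1]$ is a convex combination. Bounding the target term via the triangle inequality and the inductive hypothesis gives
\begin{align*}
\bigl|r_k+\gamma\max_{u}Q_k(s_{k+1},u)\bigr| \le R_{\max}+\gamma\,Q_{\max}.
\end{align*}
The key numerical observation is that, by the very definition of $Q_{\max}$, we have $R_{\max}\le (1-\gamma)Q_{\max}$, and therefore $R_{\max}+\gamma Q_{\max}\le Q_{\max}$. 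Combining this with the convex-combination structure yields $|Q_{k+1}(s_k,a_k)|\le Q_{\max}$, closing the induction.

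The only mildly delicate point is guaranteeing that the inequality $R_{\max}+\gamma Q_{\max}\le Q_{\max}$ is really what the definition of $Q_{\max}$ provides; this is immediate from $Q_{\max}\ge R_{\max}/(1-\gamma)$, so there is no real obstacle. The final statement $Q_{\max}\le 1/(1-\gamma)$ then follows from Assumptions \ref{assumption:bounded-reward} and \ref{assumption:bounded-Q0}, since both $R_{\max}$ and $\max_{(s,a)}Q_0(s,a)$ are upper bounded by $1$, making the numerator of $Q_{\max}$ at most $1$.
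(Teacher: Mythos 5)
The paper itself gives no proof of this lemma --- it is imported wholesale from the cited reference --- so there is no in-paper argument to compare against. Your induction is the standard (and surely the intended) proof: the inductive step is correct, since the update at the visited entry is a convex combination for $\alpha\in(0,1)$, unvisited entries are unchanged, and the definition of $Q_{\max}$ gives exactly $R_{\max}\le(1-\gamma)Q_{\max}$, hence $|r_k+\gamma\max_u Q_k(s_{k+1},u)|\le R_{\max}+\gamma Q_{\max}\le Q_{\max}$, which closes the induction.

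The one real weak point is the base case, and it traces back to the lemma statement itself. As written, the numerator of $Q_{\max}$ is $\max\{R_{\max},\max_{(s,a)}Q_0(s,a)\}$, which is insensitive to how negative $Q_0$ is, so $\|Q_0\|_\infty\le Q_{\max}$ can fail outright: take $R_{\max}=0$ and $Q_0\equiv-1$ (both permitted by \cref{assumption:bounded-reward} and \cref{assumption:bounded-Q0}), giving $Q_{\max}=0<1=\|Q_0\|_\infty$. Your remark that ``the symmetric lower bound is handled analogously using $R_{\max}$ as the dominating term'' does not repair this, because $R_{\max}$ carries no information about $\min_{(s,a)}Q_0(s,a)$. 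The induction you set up genuinely needs $\|Q_0\|_\infty\le Q_{\max}$ at $k=0$; the clean fix is to read the numerator as $\max\{R_{\max},\|Q_0\|_\infty\}$, after which your argument goes through verbatim and the consequence actually used downstream, $Q_{\max}\le 1/(1-\gamma)$, still follows from the two assumptions. You should state this correction explicitly rather than asserting the base case is immediate.
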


\section{Finite-time Analysis of Q-learning from Switching System Theory}\label{sec:convergence}

In this section, we study a discrete-time switching system model of Q-learning and establish its finite-time convergence based on the stability analysis of switching system.
We consider a version of Q-learning given in~\cref{algo:standard-Q-learning2}. Compared to the original Q-learning, the step-size $\alpha$ does not depend on the state-action pair and is constant in this paper. Moreover, the output of~\cref{algo:standard-Q-learning2} is the average $\tilde Q_{k}=\frac{1}{k}\sum_{i=0}^{k-1} {Q_k},k\geq 1$ with  instead of the final iteration $Q_k$.
\begin{algorithm}[t]
\caption{Q-Learning with a constant step-size}
  \begin{algorithmic}[1]
    \State Initialize $Q_0 \in {\mathbb R}^{|{\cal S}||{\cal A}|}$ randomly such that $\|Q_0\|_\infty \le 1$.
    \State Set $\tilde Q_0  = Q_0$
    \State Sample $s_0\sim p$
    \For{iteration $k=0,1,\ldots$}
    	\State Sample $a_k\sim \beta(\cdot|s_k)$ and $s_k\sim p(\cdot)$
        \State Sample $s_k'\sim P(s_k,a_k,\cdot)$ and $r_k= r(s_k,a_k,s_k')$
        \State Update $Q_{k+1}(s_k,a_k)=Q_k(s_k,a_k)+\alpha \{r_k+\gamma\max_{u \in {\cal A}} Q_k(s_k',u)-Q_k (s_k,a_k)\}$
        \State Update $\tilde Q_{k+1}  = \tilde Q_k  + \frac{1}{{k + 1}}(Q_{k}  - \tilde Q_k )$
    \EndFor
  \end{algorithmic}\label{algo:standard-Q-learning2}
\end{algorithm}

\subsection{Q-learning as a stochastic affine switching system}
Using the notation introduced, the update in~\cref{algo:standard-Q-learning2} can be rewritten as
\begin{align}
Q_{k+1}=Q_k+\alpha \{DR+\gamma DP\Pi_{Q_k}Q_k-DQ_k +w_{k}\},\label{eq:1}
\end{align}
where
\begin{align}
w_{k}=&(e_{a_k}\otimes e_{s_k} ) r_k+\gamma (e_{a_k}\otimes e_{s_k} )(e_{s_k'})^T \Pi_{Q_k}Q_k\nonumber\\
&-(e_{a_k}\otimes e_{s_k})(e_{a_k}\otimes e_{s_k})^T Q_k -(DR+\gamma DP\Pi_{Q_k}Q_k-DQ_k),\label{eq:w}
\end{align}
and $(s_k,a_k,r_k,s_k')$ is the sample in the $k$-th time-step.
\begin{remark}
Note that in~\cref{algo:standard-Q-learning2}, $(s_k,a_k,s_k')$ is sampled from the joint distribution
\begin{align*}
P({s_{k}'}|{s_k},{a_k})p({s_k})\beta ({a_k}|{s_k}) = P({s_k'}|{s_k},{a_k})d({s_k},{a_k}),
\end{align*}
which is represented by the matrix multiplication, $DP$, in~\eqref{eq:1}. By the definition of matrix $D$ in~\eqref{eq:8}, it is a diagonal matrix whose diagonal entries are an enumeration of $d(s,a) = p(s)\beta (a|s),(s,a) \in {\cal S} \times {\cal A}$. Therefore, it is easy to see that an entry of $DP$ is a joint distribution of a certain $(s,a,s')\in {\cal S} \times {\cal A} \times {\cal S}$. Moreover, from the definition of matrix $\Pi^{\pi}$ in~\eqref{eq:swtiching-matrix} and the greedy policy in~\eqref{eq:greedy}, the multiplication $\Pi_{Q_k}Q_k$ in~\eqref{eq:w} represents that max operator in the Q-function update in~\eqref{algo:standard-Q-learning2}.

In more details, a vector form of the Q-function update in~\eqref{algo:standard-Q-learning2} can be written as
\begin{align}
{Q_{k + 1}} = ({e_{{a_k}}} \otimes {e_{{s_k}}}){({e_{{a_k}}} \otimes {e_{{s_k}}})^T}{Q_k} + \alpha (({e_{{a_k}}} \otimes {e_{{s_k}}}){r_k} + \gamma ({e_{{a_k}}} \otimes {e_{{s_k}}}){({e_{{s_{k'}}}})^T}{\Pi _{{Q_k}}}{Q_k}).\label{eq:10}
\end{align}
Taking the conditional expectation conditioned on $Q_k$ leads to the mean dynamic
\begin{align}
{\mathbb E}[{Q_{k + 1}}|{Q_k}] = D{Q_k} + \alpha (DR + \gamma DP{\Pi _{{Q_k}}}{Q_k}),\label{eq:9}
\end{align}
where $D = {\mathbb E}[({e_{{a_k}}} \otimes {e_{{s_k}}}){({e_{{a_k}}} \otimes {e_{{s_k}}})^T}|{Q_k}]$ and $DP = {\mathbb E}[({e_{{a_k}}} \otimes {e_{{s_k}}}){({e_{{s_{k'}}}})^T}|{Q_k}]$. Adding the right-hand side of~\eqref{eq:9} to~\eqref{eq:10} and subtracting it from~\eqref{eq:10}, we obtain~\eqref{eq:1}.
\end{remark}

Moreover, by definition, the noise term has a zero mean conditioned on $Q_k$, i.e., ${\mathbb E}[w_k|Q_k]=0$.
Recall the definitions $\pi_Q(s)$ and $\Pi_Q$. Invoking the optimal Bellman equation $(\gamma DP\Pi_{Q^*}-D)Q^*+DR=0$,~\eqref{eq:1} can be further rewritten by
\begin{align}
(Q_{k+1} - Q^*) =& \{ I + \alpha (\gamma DP\Pi _{Q_k} - D)\}(Q_k - Q^*)+ \alpha \gamma DP(\Pi_{Q_k} - \Pi_{Q^*})Q^* + \alpha w_k.\label{eq:Q-learning-stochastic-recursion-form}
\end{align}
which is a linear switching system with an extra affine term, $\gamma DP(\Pi_{Q_k} - \Pi_{Q^*})Q^*$, and stochastic noise. For any $Q \in {\mathbb R}^{|{\cal S}||{\cal A}|}$, define
\begin{align*}
A_Q :=I + \alpha(\gamma DP\Pi_Q-D),\quad b_Q:= \alpha \gamma DP(\Pi_{Q} - \Pi_{Q^*})Q^*.
\end{align*}
Using the notation, the Q-learning iteration can be concisely represented as the \emph{stochastic affine switching system}
\begin{align}
Q_{k + 1}- Q^* = A_{Q_k} (Q_k - Q^*) + b_{Q_k} + \alpha w_k,\label{eq:swithcing-system-form}
\end{align}
where $A_{Q_k}$ and $b_{Q_k}$ switch among matrices from $\{I + \alpha (\gamma DP\Pi^\pi-D):\pi\in \Theta\}$ and vectors from $\{\alpha \gamma DP(\Pi^\pi - \Pi^{\pi^*})Q^* :\pi\in\Theta \}$. Note that in the switching system in~\eqref{eq:swithcing-system-form}, the switching signal is not arbitrary, and switching signal follows a switching rule associated with the greedy policy $\pi_{Q_k}(s):=\argmax_{a\in {\cal A}} Q_k(s,a)\in {\cal A}$, which changes according to $Q_k$.

Therefore, the convergence of Q-learning is now reduced to analyzing the stability of the above switching system.
A main obstacle in proving the stability  arises from the presence of the affine and stochastic terms. Without these terms, we can easily establish the exponential stability of the corresponding deterministic switching system, under arbitrary switching policy. Specifically, we have the following result.
\begin{proposition}\label{prop:stability}
For arbitrary $H_k \in {\mathbb R}^{|{\cal S}||{\cal A}|}, k\ge 0$, the linear switching system
\begin{align*}
Q_{k+1} - Q^* &= A_{H_k} (Q_k - Q^*),\quad  Q_0 - Q^*\in {\mathbb R}^{|{\cal S}||{\cal A}|},
\end{align*}
is exponentially stable with
\[
\|Q_k- Q^*\|_\infty\le \rho ^k \|Q_0 - Q^*\|_\infty,\quad k \ge 0,
\]
where $\rho$ is defined in \eqref{eq:rho}.
\end{proposition}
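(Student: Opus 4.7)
The plan is to reduce the claim to a uniform one-step bound on the induced infinity-norm of the switching matrix $A_{H_k}$. Concretely, I will show that $\|A_Q\|_\infty \le \rho$ for every $Q \in \mathbb{R}^{|\mathcal S||\mathcal A|}$, where $\rho = 1 - \alpha d_{\min}(1-\gamma)$. Once this is in hand, the submultiplicativity of the induced norm gives
\[
\|Q_{k+1} - Q^*\|_\infty \le \|A_{H_k}\|_\infty \, \|Q_k - Q^*\|_\infty \le \rho\, \|Q_k - Q^*\|_\infty,
\]
and a trivial induction on $k$ closes the proof.

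The main step — uniformly bounding $\|A_Q\|_\infty$ — is where the structure of $A_Q = I + \alpha(\gamma DP\Pi_Q - D)$ does the work. First I will argue that $A_Q$ is \emph{entrywise non-negative}. The matrix $\alpha \gamma DP\Pi_Q$ is non-negative because $D$, $P$ and $\Pi^\pi$ are all non-negative. The matrix $I - \alpha D$ is diagonal with entries $1 - \alpha d(s,a)$; under Assumptions~\ref{assumption:positive-distribution}--\ref{assumption:step-size} (so $\alpha \in (0,1)$ and $d(s,a) \le 1$), these entries lie in $[0,1)$, so $A_Q \ge 0$ entrywise. Because $A_Q$ has non-negative entries, its induced $\infty$-norm equals the maximum absolute row sum, which is just the maximum row sum.

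Next I will compute these row sums. Note that for any (deterministic or stochastic) policy $\pi$, each row of $\Pi^\pi$ is a probability distribution over $\mathcal A$ placing its mass on the appropriate state, so $P\Pi^\pi \mathbf 1 = \mathbf 1$ where $\mathbf 1$ denotes the all-ones vector of appropriate dimension. Hence $DP\Pi_Q \mathbf 1 = D \mathbf 1$, and the $(s,a)$-th row of $\alpha\gamma DP\Pi_Q$ sums to $\alpha\gamma d(s,a)$. Combining with the diagonal contribution, the $(s,a)$-th row of $A_Q$ sums to
\[
(1-\alpha d(s,a)) + \alpha\gamma d(s,a) \;=\; 1 - \alpha d(s,a)(1-\gamma).
\]
Taking the maximum over $(s,a)$ and using the definition of $d_{\min}$,
\[
\|A_Q\|_\infty = \max_{(s,a)} \bigl[1-\alpha d(s,a)(1-\gamma)\bigr] = 1 - \alpha d_{\min}(1-\gamma) = \rho.
\]
Since this bound is uniform in $Q$, it holds for every $H_k$, regardless of how the switching signal is chosen.

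There is no serious obstacle here beyond verifying non-negativity of $A_Q$ (which is what pins down the induced-norm computation) and observing that $P\Pi^\pi$ is row-stochastic; both are immediate from the construction of $P$ and $\Pi^\pi$. The arbitrariness of $H_k$ is what makes the result strong: the bound $\rho$ does not depend on $H_k$, so even an adversarial switching schedule cannot destabilize the system, which is exactly why this deterministic linear switching system will be usable as a building block for the comparison systems later in the paper.
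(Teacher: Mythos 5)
Your proposal is correct and follows essentially the same route as the paper: the paper likewise reduces the claim to the uniform bound $\|A_Q\|_\infty \le \rho$ (its Lemma~2), proved by noting $A_Q$ is entrywise non-negative and computing the row sums via the row-stochasticity of $P\Pi_Q$, after which the exponential decay follows by submultiplicativity and induction. Your verification that the diagonal entries $1-\alpha d(s,a)$ are non-negative is a small point the paper glosses over, but otherwise the arguments coincide.
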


The above result follows immediately from the key fact that $\|A_Q \|_\infty \le \rho$, which we formally state in the lemma below.

\begin{lemma}\label{lemma:max-norm-system-matrix}
For any $Q \in {\mathbb R}^{|{\cal S}||{\cal A}|}$,
$$\|A_Q \|_\infty \le \rho.$$ Here the matrix norm  $\| A \|_\infty :=\max_{1\le i \le m} \sum_{j=1}^n {|A_{ij} |}$ and $A_{ij}$ is the element of $A$ in $i$-th row and $j$-th column.
\end{lemma}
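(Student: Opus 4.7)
My plan is to exploit the fact that for a matrix with nonnegative entries the induced $\infty$-norm equals the maximum row sum, and then compute those row sums directly from the decomposition $A_Q = I - \alpha D + \alpha \gamma DP\Pi_Q$.

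The first step is to verify that $A_Q$ is entrywise nonnegative. The matrices $D$, $P$, and $\Pi_Q$ all have nonnegative entries, so $\alpha\gamma DP\Pi_Q$ is entrywise nonnegative, and every off-diagonal entry of $A_Q$ comes solely from this term. For the $(s,a)$ diagonal entry, the contribution is $1 - \alpha d(s,a) + \alpha\gamma [DP\Pi_Q]_{(s,a),(s,a)}$, which is at least $1 - \alpha d(s,a) \ge 0$ by \cref{assumption:step-size} together with the fact that $d(s,a)\le 1$. Hence $A_Q$ is entrywise nonnegative, and $\|A_Q\|_\infty$ equals the maximum row sum of $A_Q$.

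The second step is to compute those row sums. The key structural observation is that each row of the deterministic action-selection matrix $\Pi_Q$ defined in \eqref{eq:swtiching-matrix} is a one-hot vector and hence sums to $1$, and each row of the transition matrix $P$ sums to $1$ as well; therefore $P\Pi_Q$ is row-stochastic. Left-multiplying by the diagonal matrix $D$ from \eqref{eq:8} rescales row $(s,a)$ by $d(s,a)$, so the row sums of $DP\Pi_Q$ are exactly $d(s,a)$. Combining the contributions of $I$, $-\alpha D$, and $\alpha\gamma DP\Pi_Q$, the row sum of $A_Q$ at index $(s,a)$ equals $1 - \alpha d(s,a) + \alpha\gamma d(s,a) = 1 - \alpha(1-\gamma) d(s,a)$.

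Maximizing over $(s,a)$, the smallest value of $d(s,a)$ produces the largest row sum, yielding $\|A_Q\|_\infty = 1 - \alpha(1-\gamma)d_{\min} = \rho$. Notice that the bound depends on $Q$ only through the row-stochasticity of $\Pi_Q$, which holds for every greedy policy, so the estimate is uniform in $Q$, as needed for \cref{prop:stability}. The only mildly delicate point I foresee is carefully unpacking the block structure of $P$, $D$, and $\Pi_Q$ from \eqref{eq:8} and \eqref{eq:swtiching-matrix} to justify that $P\Pi_Q$ is row-stochastic; once this is in place, nonnegativity together with the row-sum computation delivers the lemma in a single short argument.
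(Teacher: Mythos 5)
Your proposal is correct and follows essentially the same route as the paper's own proof: use nonnegativity of $A_Q$ to turn the induced $\infty$-norm into a maximum row sum, exploit row-stochasticity of $P\Pi_Q$ to evaluate each row sum as $1-\alpha(1-\gamma)d(s,a)$, and maximize over $(s,a)$. The only difference is that you spell out the nonnegativity of the diagonal entries of $I-\alpha D$, which the paper asserts without comment.
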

\begin{proof}
Note
\begin{align*}
\sum_j|[A_Q]_{ij}|=&\sum_j {| [I - \alpha D + \alpha \gamma DP\Pi _Q ]_{ij}|}\\
=&  [I-\alpha D]_{ii} + \sum_j {[\alpha\gamma DP\Pi_Q ]_{ij}}\\
=& 1 - \alpha [D]_{ii} + \alpha \gamma [D]_{ii} \sum_j {[P\Pi _Q ]_{ij}}\\
=& 1 - \alpha [D]_{ii} + \alpha \gamma [D]_{ii}\\
=& 1 + \alpha [D]_{ii}(\gamma  - 1),
\end{align*}
where the first line is due to the fact that $A_Q$ is a positive matrix. Taking the maximum over $i$, we have
\begin{align*}
\| A_Q \|_\infty =& \max_{i\in \{ 1,2,\ldots ,|{\cal S}||{\cal A}|\} } \{ 1 + \alpha [D]_{ii} (\gamma-1)\}\\
=& 1 - \alpha \min_{(s,a) \in {\cal S} \times {\cal A}} d(s,a)(1 - \gamma ) \\
=& \rho,
\end{align*}
which completes the proof.
\end{proof}


However, because of the additional affine term and stochastic noises in the original switching system~\eqref{eq:swithcing-system-form}, it is not obvious how to directly derive its finite-time convergence. To circumvent the difficulty with the affine term, we will resort to two simpler comparison systems, whose trajectories upper and lower bound that of the original system, and can be more easily analyzed. These systems will be called the upper and lower comparison systems depicted in~\cref{fig:1}, which capture important behaviors of Q-learning. The upper comparison system, denoted by $Q_k^U$, upper bounds Q-learning iterate $Q_k$, while the lower comparison system, denoted by $Q_k^L$, lower bounds $Q_k$. The construction of these comparison systems is partly inspired by~\cite{lee2020unified} and exploits the special structure of the Q-learning algorithm. Unlike~\cite{lee2020unified}, here we focus on the discrete-time domain and a finite-time analysis. To address the difficulty with the stochastic noise, we introduce a two-phase analysis: the first phase captures the noise effect of the lower comparison system, while the second phase captures the difference between the two comparison systems when noise effect vanishes.
\begin{figure}[t]
\centering\epsfig{figure=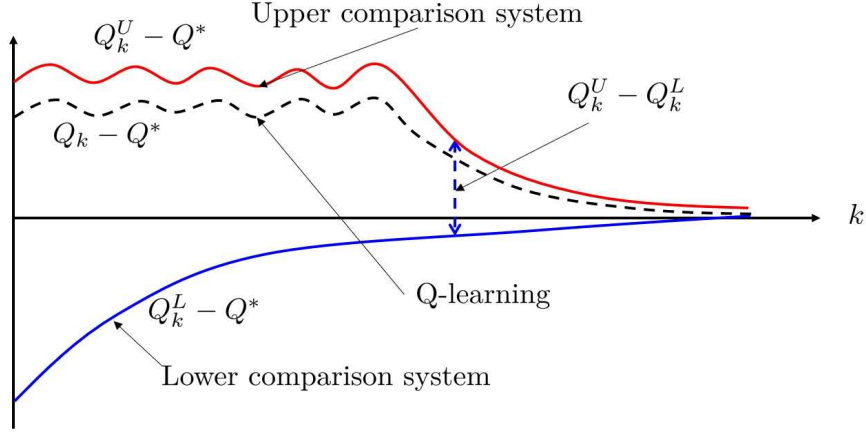,width=12cm,height=6cm}
\caption{Overview of the proposed analysis}\label{fig:1}
\end{figure}

\subsection{Lower comparison system}
Consider the stochastic linear system
\begin{align}
Q_{k+1}^L- Q^*=A_{Q^*} (Q_k^L - Q^*) + \alpha w_{k},\quad  Q_{0}^L-Q^* \in {\mathbb R}^{|{\cal S}||{\cal A}|},\label{eq:lower-system}
\end{align}
 where the stochastic noise $w_{k}$ is the same as the original system~\eqref{eq:Q-learning-stochastic-recursion-form}. We call it the \emph{lower comparison system}.
\begin{proposition}\label{prop:lower-bound2}
Suppose $Q_0^L- Q^*\le Q_0 - Q^*$, where $\le$ is used as the element-wise inequality. Then,
$$Q_k^L- Q^*\le Q_k-Q^*,$$
for all $k\geq0$.
\end{proposition}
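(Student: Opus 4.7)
The plan is to prove the inequality by induction on $k$, using the fact that the difference between the two systems is driven by a nonnegative update whenever the induction hypothesis holds. The base case is immediate from the hypothesis $Q_0^L - Q^* \le Q_0 - Q^*$, so everything reduces to the inductive step.

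For the inductive step, I subtract the lower system dynamics~\eqref{eq:lower-system} from the original recursion~\eqref{eq:swithcing-system-form}. The noise terms $\alpha w_k$ cancel exactly since the same $w_k$ appears in both systems, and after inserting $\pm A_{Q^*}(Q_k-Q^*)$ I obtain
\begin{align*}
Q_{k+1} - Q_{k+1}^L
= A_{Q^*}(Q_k - Q_k^L) + (A_{Q_k} - A_{Q^*})(Q_k - Q^*) + b_{Q_k}.
\end{align*}
Using $A_{Q_k} - A_{Q^*} = \alpha\gamma DP(\Pi_{Q_k} - \Pi_{Q^*})$ and the definition $b_{Q_k} = \alpha\gamma DP(\Pi_{Q_k}-\Pi_{Q^*})Q^*$, the last two terms collapse into
\begin{align*}
\alpha\gamma DP(\Pi_{Q_k} - \Pi_{Q^*})\bigl[(Q_k - Q^*) + Q^*\bigr]
= \alpha\gamma DP(\Pi_{Q_k}Q_k - \Pi_{Q^*}Q_k).
\end{align*}
This algebraic simplification is the central step; the key point is that after cancellation the expression depends on $Q_k$ rather than on $Q_k-Q^*$, which is precisely what allows us to invoke the greedy-maximization property.

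I then argue that both remaining terms are element-wise nonnegative. By the definition of the greedy policy in~\eqref{eq:greedy}, $[\Pi_{Q_k}Q_k]_s = \max_a Q_k(s,a) \ge Q_k(s,\pi^*(s)) = [\Pi_{Q^*}Q_k]_s$, so $\Pi_{Q_k}Q_k - \Pi_{Q^*}Q_k \ge 0$ element-wise; since $D$ and $P$ have nonnegative entries, $\alpha\gamma DP(\Pi_{Q_k}Q_k - \Pi_{Q^*}Q_k) \ge 0$. For the other term, the entry-wise nonnegativity of $A_{Q^*} = I - \alpha D + \alpha\gamma DP\Pi_{Q^*}$ follows from $\alpha \in (0,1)$ and $[D]_{ii}=d(s,a)\in(0,1)$ (a fact already used in the proof of~\cref{lemma:max-norm-system-matrix}, where $A_Q$ is noted to be a nonnegative matrix). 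Combined with the induction hypothesis $Q_k - Q_k^L \ge 0$, this yields $A_{Q^*}(Q_k - Q_k^L) \ge 0$.

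Putting these two nonnegative contributions together gives $Q_{k+1}-Q_{k+1}^L \ge 0$, closing the induction. The main obstacle I anticipate is not a hard analytic step but rather the bookkeeping: one has to choose the right $\pm A_{Q^*}(Q_k-Q^*)$ splitting so that the affine term $b_{Q_k}$ combines with $(A_{Q_k}-A_{Q^*})(Q_k-Q^*)$ into the clean greedy-gap expression $\alpha\gamma DP(\Pi_{Q_k}-\Pi_{Q^*})Q_k$; otherwise the sign of the inhomogeneous contribution is not obvious.
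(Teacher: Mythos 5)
Your proof is correct and follows essentially the same route as the paper's: the paper likewise proceeds by induction, collapses $(A_{Q_k}-A_{Q^*})(Q_k-Q^*)+b_{Q_k}$ into $\alpha\gamma DP(\Pi_{Q_k}-\Pi_{Q^*})Q_k\ge 0$ via the greedy-maximization property, and closes the induction using the entrywise nonnegativity of $A_{Q^*}$. The only cosmetic difference is that you bound the difference $Q_{k+1}-Q_{k+1}^L$ directly while the paper writes the same steps as a chain of inequalities on $Q_{k+1}-Q^*$.
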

\begin{proof}
The proof is done by an induction argument. Suppose the result holds for some $k \ge 0$. Then,
\begin{align*}
&(Q_{k+1}- Q^*)\\
=&A_{Q^*} (Q_k-Q^*)+(A_{Q_k}-A_{Q^*}) (Q_k-Q^*)+b_{Q_k}+ \alpha w_k\\
 =&A_{Q^*} (Q_k-Q^*) + \alpha\gamma DP(\Pi_{Q_k}-\Pi_{Q^*})Q_k+\alpha w_k\\
\ge & A_{Q^*} (Q_k-Q^*) + \alpha w_k\\
\ge&  A_{Q^*} (Q_k^L-Q^*) + \alpha w_k\\
=& Q_{k+1}^L-Q^*,
\end{align*}
where the third line is due to $DP(\Pi_{Q_k}- \Pi_{Q^*})Q_k\ge DP(\Pi_{Q^*}-\Pi_{Q^*})Q_k=0$ and the fourth line is due to the hypothesis $Q_k^L- Q^*\le Q_k-Q^*$ and the fact that $A_{Q^*}$ is a positive matrix (all elements are nonnegative). The proof is completed by induction.
\end{proof}
\begin{remark}
Rearranging terms, the original system~\eqref{eq:swithcing-system-form} can be written as
\[
Q_{k + 1}  - Q^*  = A_{Q^*} (Q_k - Q^* ) + \underbrace {\alpha \gamma DP(\Pi _{Q_k }  - \Pi _{Q^* } )Q_k }_{ = :h_{Q_k}} + \alpha w_k
\]
where one can easily prove that $h_{Q_k} \geq 0$ using the definition of $\Pi _{Q_k }$, i.e., $\Pi _{Q_k } Q_k \geq \Pi _{Q^*} Q_k$. Intuitively, removing this nonnegative bias term, $h_{Q_k}$, leads to the lower comparison system. The vector $h_{Q_k}$ represents a portion of the gap between the original and lower systems incurred at a single time-step.
\end{remark}

Note that the mean dynamics of the lower comparison system is simply a linear system. By~\cref{prop:stability}, we have the exponential stability of the mean dynamics:
\begin{align}
\|{\mathbb E}[Q_k^L]-Q^*\|_\infty\le\rho^k \|Q_0^L- Q^*\|_\infty,\quad \forall k \ge 0.\label{eq:2}
\end{align}
Furthermore, we can conclude that $A_{Q^*}$ is Schur, i.e., the magnitude of all its engenvalues is strictly less than one, and from the Lyapunov theory for linear systems, there exists a positive definite matrix $M\succ 0$ and $\beta \in (0,1)$ such that
\[
A_{Q^*}^T M A_{Q^*} \preceq \beta M.
\]
The parameter $\beta \in (0,1)$ determines the convergence speed of the state to the origin, and it depends on the structure of the matrix $A_{Q^*}$. We prove that in our case, an upper bound on $\beta$ can be expressed in terms of $\rho$. In fact, we can set $\beta = (\rho+\epsilon)^2$ for arbitrary $\epsilon >0$ such that $\beta\in (0,1)$.
\begin{proposition}\label{prop:Lyapunov-theorem}
For any $\epsilon >0$ such that $\rho + \epsilon \in (0,1)$, there exists the corresponding positive definite $M\succ 0 $ such that
\[
A_{Q^*}^T M A_{Q^*} =  (\rho + \epsilon)^2 (M - I),
\]
and
\begin{align*}
\lambda_{\min}(M) \ge 1,\; \lambda_{\max}(M) \le \frac{|{\cal S}||{\cal A}|}{1-\left({\frac{\rho }{\rho+\epsilon} }\right)^2}.
\end{align*}
\end{proposition}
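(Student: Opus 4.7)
The plan is to recast the asserted identity as a standard discrete-time Lyapunov equation after rescaling, then use the series solution to read off the eigenvalue bounds. Set $\tilde A := A_{Q^*}/(\rho+\epsilon)$. Dividing the target equation $A_{Q^*}^T M A_{Q^*} = (\rho+\epsilon)^2(M-I)$ by $(\rho+\epsilon)^2$ and rearranging gives the standard form $M - \tilde A^T M \tilde A = I$. So the task reduces to producing a positive definite $M$ solving this Lyapunov equation with the stated eigenvalue bounds.

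First I would verify that $\tilde A$ is Schur stable. By \cref{lemma:max-norm-system-matrix}, $\|A_{Q^*}\|_\infty \le \rho$, hence $\|\tilde A\|_\infty \le \rho/(\rho+\epsilon) < 1$ under the hypothesis that $\rho+\epsilon \in (0,1)$. Since the spectral radius is dominated by any induced norm, this gives $\rho(\tilde A) < 1$. Standard discrete Lyapunov theory then guarantees that
\[
M := \sum_{k=0}^\infty (\tilde A^T)^k \tilde A^k
\]
is a well-defined, positive definite, symmetric solution to $M - \tilde A^T M \tilde A = I$, which, reversing the rescaling, is precisely the claimed identity.

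The lower bound $\lambda_{\min}(M) \ge 1$ is immediate: every summand $(\tilde A^T)^k \tilde A^k$ is positive semidefinite, and the $k=0$ term is $I$, so $M \succeq I$. For the upper bound I would invoke the trace dominance $\lambda_{\max}(M) \le \mathrm{tr}(M)$ (valid since $M \succeq 0$) and compute
\[
\mathrm{tr}(M) = \sum_{k=0}^\infty \mathrm{tr}\bigl((\tilde A^T)^k \tilde A^k\bigr) = \sum_{k=0}^\infty \|\tilde A^k\|_F^2.
\]
For any $n \times n$ real matrix $N$ with $n := |\mathcal{S}||\mathcal{A}|$, the elementary inequality $\sum_j |N_{ij}|^2 \le \bigl(\sum_j |N_{ij}|\bigr)^2 \le \|N\|_\infty^2$, summed over $i$, yields $\|N\|_F^2 \le n\|N\|_\infty^2$. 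Combined with submultiplicativity $\|\tilde A^k\|_\infty \le \|\tilde A\|_\infty^k \le (\rho/(\rho+\epsilon))^k$, the geometric series sums to the advertised bound $n/\bigl(1-(\rho/(\rho+\epsilon))^2\bigr)$.

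The main obstacle I expect is the mismatch in norms: \cref{lemma:max-norm-system-matrix} controls the $\infty$-norm of $A_{Q^*}$, while the target $\lambda_{\max}(M)$ is a spectral ($2$-norm) quantity. Passing through the trace upper bound on $\lambda_{\max}(M)$, the Frobenius representation of $\mathrm{tr}((\tilde A^T)^k \tilde A^k)$, and the $\|\cdot\|_F$ versus $\|\cdot\|_\infty$ inequality is the crucial bridge; once that is in place, the rest is the standard series solution of a Lyapunov equation and does not require any additional Q-learning-specific structure.
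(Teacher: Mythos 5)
Your proof is correct and takes essentially the same approach as the paper: the identical series construction $M=\sum_{k=0}^\infty(\rho+\epsilon)^{-2k}(A_{Q^*}^k)^TA_{Q^*}^k$, the same telescoping verification of the Lyapunov identity, and the same $M\succeq I$ argument for $\lambda_{\min}(M)\ge 1$. The only cosmetic difference is the last step for $\lambda_{\max}(M)$: you pass through $\mathrm{tr}(M)=\sum_k\|\tilde A^k\|_F^2$ and the bound $\|N\|_F^2\le|{\cal S}||{\cal A}|\,\|N\|_\infty^2$, whereas the paper bounds $\lambda_{\max}$ termwise via $\|A^k\|_2^2\le|{\cal S}||{\cal A}|\,\|A^k\|_\infty^2$; both yield the identical final bound.
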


The above result can be easily verified by setting
$$M = \sum_{k=0}^\infty {\left({\frac{1}{\rho+\epsilon} }\right)^{2k} (A_{Q^*}^k)^T A_{Q^*}^k}.$$
We defer the detailed proof in the Appendix~\ref{app:prop-Lyapunov}. Based on this result, we can derive a finite-time error bound for the lower comparison system.

\begin{theorem}\label{thm:bound-lower-comparison}
Under Assumptions 1-4, for any $N\geq0$, it holds
\begin{align}
{\mathbb E}\left[ {\left\| {\frac{1}{N}\sum\limits_{k = 0}^{N - 1} {Q_k^L }  - Q^* } \right\|_\infty  } \right] \le \sqrt {\frac{{32\alpha |{\cal S}|^2 |{\cal A}|^2 }}{{d_{\min } (1 - \gamma )^3 }} + \frac{1}{N}\frac{{2|{\cal S}|^2 |{\cal A}|^2 }}{{\alpha d_{\min } (1 - \gamma )}}{\mathbb E}[\left\| {Q_0  - Q^* } \right\|_\infty ^2 ]}.\label{eq:lower-system-bound}
\end{align}
\end{theorem}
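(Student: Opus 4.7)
The plan is to use the quadratic Lyapunov function $V(x):=x^\top M x$ supplied by~\cref{prop:Lyapunov-theorem}. I would first fix $\epsilon:=(1-\rho)/2$, so that $\rho+\epsilon=(1+\rho)/2$ and hence $1-(\rho+\epsilon)^2=(1-\rho)(3+\rho)/4=\Theta(1-\rho)=\Theta(\alpha d_{\min}(1-\gamma))$, while the inequality $1+3\rho\ge(1+\rho)^2$ (valid on $[0,1]$) gives the simplification $\lambda_{\max}(M)\le|{\cal S}||{\cal A}|/(1-\rho)$. From $\lambda_{\min}(M)\ge 1$ I also get $\|x\|_\infty^2\le\|x\|_2^2\le V(x)$, which is what lets me push an $\ell_\infty$ bound through a quadratic Lyapunov argument.

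Set $e_k:=Q_k^L-Q^*$. Since $\mathbb{E}[w_k\mid\mathcal{F}_k]=0$ and $e_k$ is $\mathcal{F}_k$-measurable, expanding $V(e_{k+1})=(A_{Q^*}e_k+\alpha w_k)^\top M(A_{Q^*}e_k+\alpha w_k)$ and using $A_{Q^*}^\top M A_{Q^*}=(\rho+\epsilon)^2(M-I)$ produces
\begin{align*}
\mathbb{E}[V(e_{k+1})\mid\mathcal{F}_k]=(\rho+\epsilon)^2 V(e_k)-(\rho+\epsilon)^2\|e_k\|_2^2+\alpha^2\mathbb{E}[w_k^\top M w_k\mid\mathcal{F}_k].
\end{align*}
Rearranging and using $(\rho+\epsilon)^2\le 1$ yields
\begin{align*}
(\rho+\epsilon)^2\|e_k\|_2^2\le V(e_k)-\mathbb{E}[V(e_{k+1})\mid\mathcal{F}_k]+\alpha^2\mathbb{E}[w_k^\top M w_k\mid\mathcal{F}_k].
\end{align*}
Taking total expectation, summing from $k=0$ to $N-1$, telescoping the first two terms, and discarding the nonnegative $\mathbb{E}[V(e_N)]$ gives
\begin{align*}
(\rho+\epsilon)^2\sum_{k=0}^{N-1}\mathbb{E}[\|e_k\|_2^2]\le \mathbb{E}[V(e_0)]+\alpha^2\sum_{k=0}^{N-1}\mathbb{E}[w_k^\top M w_k].
\end{align*}

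To pass to the averaged iterate, I would use $\|\bar Q_N^L-Q^*\|_\infty\le\|\bar Q_N^L-Q^*\|_2$ and convexity of $\|\cdot\|_2^2$ to obtain $\|\bar Q_N^L-Q^*\|_\infty^2\le N^{-1}\sum_{k=0}^{N-1}\|e_k\|_2^2$, so taking expectations and combining with the telescoped inequality,
\begin{align*}
\mathbb{E}[\|\bar Q_N^L-Q^*\|_\infty^2]\le\frac{\mathbb{E}[V(e_0)]}{N(\rho+\epsilon)^2}+\frac{\alpha^2\max_k\mathbb{E}[w_k^\top M w_k]}{(\rho+\epsilon)^2}.
\end{align*}
I would then substitute $V(e_0)\le\lambda_{\max}(M)|{\cal S}||{\cal A}|\,\|Q_0-Q^*\|_\infty^2$ and $\mathbb{E}[w_k^\top M w_k]\le\lambda_{\max}(M)|{\cal S}||{\cal A}|\,\|w_k\|_\infty^2$, where the uniform bound $\|w_k\|_\infty=O(1/(1-\gamma))$ follows from~\cref{lemma:bounded-Q} together with~\cref{assumption:bounded-reward}; combined with $\lambda_{\max}(M)\le|{\cal S}||{\cal A}|/(\alpha d_{\min}(1-\gamma))$ and the harmless constant lower bound $(\rho+\epsilon)^2\ge 1/2$, this produces the two terms $\frac{2|{\cal S}|^2|{\cal A}|^2\mathbb{E}[\|Q_0-Q^*\|_\infty^2]}{N\alpha d_{\min}(1-\gamma)}$ and $\frac{32\alpha|{\cal S}|^2|{\cal A}|^2}{d_{\min}(1-\gamma)^3}$ inside the square root. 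A final application of Jensen's inequality $\mathbb{E}[\|\cdot\|_\infty]\le\sqrt{\mathbb{E}[\|\cdot\|_\infty^2]}$ yields the bound~\eqref{eq:lower-system-bound}.

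The main obstacle is the joint tuning of $\epsilon$: the effective contraction $1-(\rho+\epsilon)^2$, the spectral bound $\lambda_{\max}(M)$, and the prefactor $1/(\rho+\epsilon)^2$ all depend on it in competing ways, and a naive choice can cost an extra factor of $1/(1-\rho)=1/(\alpha d_{\min}(1-\gamma))$ and destroy the targeted $\alpha$ versus $1/(\alpha N)$ decomposition. The symmetric choice $\rho+\epsilon=(1+\rho)/2$ simultaneously secures $1-(\rho+\epsilon)^2=\Theta(1-\rho)$, $\lambda_{\max}(M)=O(|{\cal S}||{\cal A}|/(1-\rho))$, and $1/(\rho+\epsilon)^2=O(1)$, which is what makes the constants line up cleanly. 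A minor secondary point is the uniform $\ell_\infty$ bound on $w_k$, which relies on the boundedness of the Q-learning iterates from~\cref{lemma:bounded-Q} (note that $w_k$ is driven by $Q_k$, not by $Q_k^L$, so the stability of the lower comparison system itself is not needed here).
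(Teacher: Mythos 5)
Your proposal follows essentially the same route as the paper's proof: the Lyapunov function $V(x)=x^T M x$ from \cref{prop:Lyapunov-theorem} with the same choice $\epsilon=(1-\rho)/2$, the vanishing cross term via ${\mathbb E}[w_k\mid{\cal F}_k]=0$, a telescoped drift inequality, the variance bound $\|w_k\|_\infty\le 4/(1-\gamma)$ from \cref{lemma:bounded-Q} and the reward/initialization assumptions, and Jensen's inequality to pass to the averaged iterate. (Your handling of the average --- bounding $\|\bar Q_N^L-Q^*\|_\infty^2$ by $N^{-1}\sum_k\|e_k\|_2^2$ via convexity before taking expectations --- is a cosmetic reordering of the paper's two applications of Jensen.)

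The one concrete flaw is the claim $(\rho+\epsilon)^2\ge 1/2$. With $\rho+\epsilon=(1+\rho)/2$ this requires $\rho\ge\sqrt{2}-1$, which fails whenever $\alpha d_{\min}(1-\gamma)>2-\sqrt{2}$; in general you only have $(\rho+\epsilon)^2\ge 1/4$, so your final constants come out as $64\alpha$ and $4/(\alpha N)$ rather than the stated $32\alpha$ and $2/(\alpha N)$. The offending prefactor $1/(\rho+\epsilon)^2$ enters because you telescope via $(\rho+\epsilon)^2\|e_k\|_2^2\le V(e_k)-{\mathbb E}[V(e_{k+1})\mid{\cal F}_k]+\alpha^2{\mathbb E}[w_k^T M w_k\mid{\cal F}_k]$. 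The paper avoids it entirely: since $\lambda_{\min}(M)\ge1$ gives $V(e_k)\ge\|e_k\|_2^2$, the term $((\rho+\epsilon)^2-1)V(e_k)$ is bounded above by $((\rho+\epsilon)^2-1)\|e_k\|_2^2$, which combines with $-(\rho+\epsilon)^2\|e_k\|_2^2$ to yield a drift coefficient of exactly $-1$; the telescoped inequality then reads $\sum_k{\mathbb E}[\|e_k\|_2^2]\le{\mathbb E}[V(e_0)]+\alpha^2\lambda_{\max}(M)\sum_k{\mathbb E}[w_k^T w_k]$ with no $(\rho+\epsilon)^{-2}$ prefactor, and the (slightly looser) bound $\lambda_{\max}(M)\le 2|{\cal S}||{\cal A}|/(\alpha d_{\min}(1-\gamma))$ then produces exactly the stated constants. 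With that one repair --- or by simply accepting constants twice as large --- your argument goes through.
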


\begin{proof}
Define the Lyapunov function $V(x) = x^T M x$,
$${\cal F}_k:=\{Q_0^L,Q_0,w_0,Q_1^L,Q_1,w_1,\ldots,Q_{k-1}^L,Q_{k-1},w_{k-1},Q_k^L,Q_k\},$$
and denote $A=A_{Q^*}$. We have
\begin{align*}
&{\mathbb E}[V(Q_{k+1}^L-Q^*)|{\cal F}_k]\\
=& {\mathbb E}[(A(Q_k^L-Q^*) + \alpha w_k )^T M(A(Q_k^L - Q^*)+\alpha w_k)|{\cal F}_k]\\
=& {\mathbb E}[(Q_k^L-Q^*)^T A^T M A(Q_k^L-Q^*)] + \alpha^2 w_k^T M w_k|{\cal F}_k]\\
\leq &(\rho + \epsilon)^2 V(Q_k^L-Q^*)- (\rho + \epsilon)^2 \|Q_k^L  - Q^*\|^2+\lambda _{\max } (M)\alpha^2{\mathbb E}[w_k^T w_k|{\cal F}_k].
\end{align*}
Here $\epsilon>0$ is such that $\rho+\epsilon<1$. The first inequality comes from \cref{prop:Lyapunov-theorem}. The second equality is due to the fact that
\begin{align*}
&{\mathbb E}[{w_k}|{\cal F}_k]\\
=& {\mathbb E}[({e_{{a_k}}} \otimes {e_{{s_k}}}){r_k} + \gamma ({e_{{a_k}}} \otimes {e_{{s_k}}}){({e_{{s_{k'}}}})^T}{\Pi _{{Q_k}}}{Q_k}\\
&- ({e_{{a_k}}} \otimes {e_{{s_k}}}){({e_{{a_k}}} \otimes {e_{{s_k}}})^T}{Q_k} - (DR + \gamma DP{\Pi _{{Q_k}}}{Q_k} - D{Q_k})|{\cal F}_k]\\
=& {\mathbb E}[({e_{{a_k}}} \otimes {e_{{s_k}}}){r_k} + \gamma ({e_{{a_k}}} \otimes {e_{{s_k}}}){({e_{{s_{k'}}}})^T}{\Pi _{{Q_k}}}{Q_k} - ({e_{{a_k}}} \otimes {e_{{s_k}}}){({e_{{a_k}}} \otimes {e_{{s_k}}})^T}{Q_k}|{Q_k}]\\
& - (DR + \gamma DP{\Pi _{{Q_k}}}{Q_k} - D{Q_k})\\
=& DR + \gamma DP{\Pi _{{Q_k}}}{Q_k} - D{Q_k} - (DR + \gamma DP{\Pi _{{Q_k}}}{Q_k} - D{Q_k})\\
=& 0.
\end{align*}
Therefore, we have
\begin{align*}
{\mathbb E}[\alpha w_k^TMA(Q_k^L - {Q^*})|{\cal F}_k] =& {\mathbb E}[\alpha w_k^TMA(Q_k^L - {Q^*})|Q_k^L,{Q_k}]\\
 =& \alpha {\mathbb E}[w_k^T|{Q_k}]MA(Q_k^L - {Q^*})\\
  =& 0
\end{align*}
Subtracting $V(Q_k^L-Q^*)$ from both sides and using $\lambda_{\min}(M) \ge 1$ in~\cref{prop:Lyapunov-theorem} leads to
\begin{align*}
&{\mathbb E}[V(Q_{k+1}^L-Q^*)|{\cal F}_k ] - V(Q_k^L  - Q^* )\\
\le& (\rho+\varepsilon)^2V(Q_k^L-Q^*) - V(Q_k^L-Q^*)- (\rho+\varepsilon)^2 \|Q_k^L  - Q^*\|^2\\
& +\alpha^2 \lambda_{\max} (M){\mathbb E}[w_k^T w_k |{\cal F}_k ]\\
=& ((\rho+\varepsilon)^2-1) V(Q_k^L-Q^*) -(\rho+\varepsilon)^2 \|Q_k^L - Q^*\|^2 \\
& + \alpha^2 \lambda_{\max}(M){\mathbb E}[w_k^T w_k |{\cal F}_k]\\
\le& ((\rho+\varepsilon)^2-1) \|Q_k^L - Q^*\|^2 -(\rho+\varepsilon)^2 \|Q_k^L - Q^*\|^2 \\
& + \alpha^2 \lambda_{\max}(M){\mathbb E}[w_k^T w_k |{\cal F}_k]\\
=& -\|Q_k^L -Q^* \|^2 +\alpha^2 \lambda_{\max} (P){\mathbb E}[w_k^T w_k |{\cal F}_k],
\end{align*}
where the last inequality uses the facts, $(\rho + \varepsilon)^2 -1 < 0$ and $\lambda_{\min}(M) \ge 1$. Therefore, we have
\begin{align*}
&{\mathbb E}[V(Q_{k+1}^L-Q^*)|{\cal F}_k]-V(Q_k^L-Q^*)\\
 \le& (\rho+\varepsilon-2)\|Q_k^L-Q^*\|^2 + \alpha^2 \lambda_{\max}(M) {\mathbb E}[w_k^T w_k|{\cal F}_k].
\end{align*}

Taking the expectation ${\mathbb E}[\cdot]$ on both sides and rearranging terms yield
\begin{align*}
&(2-\rho-\varepsilon) {\mathbb E}[\|Q_k^L - Q^*\|^2]\\
\le& {\mathbb E}[V(Q_k^L-Q^*)] - {\mathbb E}[V(Q_{k + 1}^L - Q^*)]+ \alpha ^2 \lambda_{\max} (M) {\mathbb E}[w_k^T w_k]
\end{align*}

Next we show that the variance of $w_k$ is bounded:
\[
{\mathbb E}[w_k^T w_k |{\cal F}_k] \le W:= \frac{16|{\cal S}||{\cal A}|}{(1 - \gamma)^2}.
\]
This is because
\begin{align*}
\|w_k\|_\infty \le & \|((e_a\otimes e_s) - D)r_k\|_\infty \\
& + \gamma \|(e_a \otimes e_s)(e_{s'})^T - DP \|_\infty  \|\Pi_{Q_k}\|_\infty  \|Q_k\|_\infty \\
& + \| ((e_a\otimes e_s )(e_a\otimes e_s)^T - D)\|_\infty  \|Q_k\|_\infty\\
\le & 2R_{\max}+2\gamma Q_{\max} + 2Q_{\max}\\
\le & \frac{4}{1-\gamma},
\end{align*}
where the last inequality comes from Assumptions~\ref{assumption:bounded-reward}-\ref{assumption:bounded-Q0}, and~\cref{lemma:bounded-Q}. Hence, ${\mathbb E}[w_k^T w_k| {\cal F}_k] \le W$.

Summing both sides from $k=0$ to $k=N-1$ and dividing by $N$ and $2 - \rho  - \varepsilon >0$ leads to
\begin{align*}
&\frac{1}{N}\sum_{k=0}^{N-1} {{\mathbb E}[\|Q_k^L-Q^*\|^2]}\\
\le & \frac{\alpha^2\lambda_{\max}(M)W}{2-\rho-\varepsilon}+\frac{1}{2-\rho-\varepsilon}\frac{1}{N}{\mathbb E}[V(Q_0^L-Q^*)]\\
\le& \alpha^2 \lambda_{\max}(M)W+\frac{\lambda_{\max}(M)}{N} {\mathbb E}[\|Q_0^L-Q^*\|^2]
\end{align*}
where we used $\lambda_{\min}(M) \|x\|_2^2\le V(x) \le\lambda_{\max}(M) \|x\|_2^2$. We use the bound $\lambda_{\max}(M) \le \frac{|{\cal S}||{\cal A}|}{1-\left({\frac{\rho }{\rho+\epsilon} }\right)^2}$ in~\cref{prop:Lyapunov-theorem}, let $Q_0^L = Q_0$, and set $\varepsilon  = \frac{1-\rho}{2}$ so that $\rho+\varepsilon=\frac{1+\rho}{2}\in (0,1)$ to have
\begin{align*}
&\sum_{k=0}^{N-1} {\frac{1}{N}} {\mathbb E}[\|Q_k^L-Q^*\|^2]\\
\le & \frac{\alpha^2 |{\cal S}||{\cal A}|W}{1-\left(\frac{\rho}{\rho+\varepsilon} \right)^2} + \frac{1}{N}\frac{|{\cal S}||{\cal A}|}{1-\left(\frac{\rho}{\rho+\varepsilon} \right)^2} {\mathbb E}[\|Q_0-Q^*\|^2]\\
\leq & \alpha^2 \frac{(1+\rho)|{\cal S}||{\cal A}|W}{1-\rho} + \frac{1}{N}\frac{(1+\rho)|{\cal S}||{\cal A}|}{1-\rho} {\mathbb E}[\|Q_0-Q^*\|^2]\\
\le& \frac{2\alpha |{\cal S}||{\cal A}|W}{d_{\min}(1-\gamma)} + \frac{1}{N}\frac{2|{\cal S}||{\cal A}|}{\alpha d_{\min} (1-\gamma)} {\mathbb E}[\|Q_0-Q^*\|^2]
\end{align*}

Taking the square root on both sides, using the subadditivity of the square root, and combining with the relations
\begin{align*}
&\sqrt{\sum_{k=0}^{N-1} {\frac{1}{N} {\mathbb E}[\|Q_k^L - Q^*\|_2^2]}}\\
\ge& \sum_{k=0}^{N-1} {\frac{1}{N} {\mathbb E}[\|Q_k^L-Q^*\|_2]}\ge \sum_{k=0}^{N-1} {\frac{1}{N} {\mathbb E}[\|Q_k-Q^*\|_\infty]}
\end{align*}
and
\begin{align*}
\|Q_0 - Q^*\|_2^2\le& |{\cal S}||{\cal A}|\|Q_0-Q^*\|_\infty^2,
\end{align*}
which applies the concavity of the square root function and Jensen inequality, we further have
\begin{align}
\sum\limits_{k = 0}^{N - 1} {\frac{1}{N} {\mathbb E}[\left\| {Q_k^L  - Q^* } \right\|_\infty  ]}  \le \sqrt {\frac{{32\alpha |{\cal S}|^2 |{\cal A}|^2 }}{{d_{\min } (1 - \gamma )^3 }} + \frac{1}{N}\frac{{2|{\cal S}|^2 |{\cal A}|^2 }}{{\alpha d_{\min } (1 - \gamma )}} {\mathbb E}[\left\| {Q_0  - Q^* } \right\|_\infty ^2 ]}.\label{eq:6}
\end{align}

Using the Jensen inequality again yields the desired result.
\end{proof}

Before closing this subsection, we provide a simple example which shows the case that the gap between the lower comparison system and the original system is tight.
\begin{example}\label{ex:1}
Consider an MDP with ${\cal S}=\{1\}$, ${\cal A}=\{1\}$, $\gamma = 0.9$, where a reward is one at every time instances. In this case, the optimal policy is defined with $\pi ^*(1) = 1$, and the corresponding optimal Q-function is $Q^* = \frac{1}{{1 - \gamma }}$. The overall system is deterministic. In this case, $D=P = 1$ and $\Pi_Q=1$ for any $Q \in {\mathbb R}$. Then, we have $A_Q  = 1 + \alpha (0.9 - 1),b_Q  = 0$, and the switching system in~\eqref{eq:swithcing-system-form} is given as
\[
Q_{k + 1}  - Q^*  = (1 - 0.1\alpha )(Q_k  - Q^* ).
\]
On the other hand, since $A_{Q^*}  = 1 + \alpha (0.9 - 1)$, the lower system in~\eqref{eq:lower-system} is the same as the original system, i.e.,
\[
Q^L_{k + 1}  - Q^*  = (1 - 0.1\alpha )(Q^L_k  - Q^* ).
\]
Therefore, with $Q_0 = Q_0^L$, the lower bound is tight in the sense that $Q_k -Q^* = Q_k^L - Q^*$ for all $k \geq 0$.
\end{example}

\subsection{Upper comparison system}
Now consider the stochastic linear switching system
\begin{align}
Q_{k+1}^U-Q^*= A_{Q_k}(Q_k^U-Q^*)+\alpha w_k,\quad Q_0^U-Q^*\in {\mathbb R}^{|{\cal S}||{\cal A}|},
\label{eq:upper-system}
\end{align}
 where the stochastic noise $w_k$ is kept the same as the original system. We will call it the \emph{upper comparison system}.
\begin{proposition}
Suppose $Q_0^U-Q^*\ge Q_0-Q^*$, where $\geq $ is used as the element-wise inequality. Then,
$$Q_k^U-Q^*\ge Q_k-Q^*,$$
for all $k \ge 0$.
\end{proposition}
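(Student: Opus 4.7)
The plan is to mirror the induction used for the lower comparison system in \cref{prop:lower-bound2}, but with the inequality reversed. The base case $k=0$ is exactly the hypothesis $Q_0^U - Q^* \ge Q_0 - Q^*$, so I only need to handle the inductive step.

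Assume $Q_k^U - Q^* \ge Q_k - Q^*$ element-wise. Subtracting the original affine switching system \eqref{eq:swithcing-system-form} from the upper comparison system \eqref{eq:upper-system}, the noise terms $\alpha w_k$ cancel (they are shared) and one gets
\begin{align*}
(Q_{k+1}^U - Q^*) - (Q_{k+1} - Q^*) = A_{Q_k}\bigl((Q_k^U - Q^*) - (Q_k - Q^*)\bigr) - b_{Q_k}.
\end{align*}
So it suffices to show that $A_{Q_k}$ is a nonnegative (entrywise) matrix and that $b_{Q_k} \le 0$ element-wise; then the right-hand side is a sum of two nonnegative vectors by the induction hypothesis.

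Nonnegativity of $A_{Q_k} = I - \alpha D + \alpha \gamma DP\Pi_{Q_k}$ was already observed in the proof of \cref{lemma:max-norm-system-matrix} (under \cref{assumption:step-size}, $I - \alpha D$ has nonnegative diagonal, and $\alpha \gamma DP\Pi_{Q_k}$ is nonnegative since $D$, $P$, and $\Pi_{Q_k}$ are all nonnegative). For $b_{Q_k} = \alpha \gamma DP(\Pi_{Q_k} - \Pi_{Q^*})Q^*$, the key observation is that $\pi_{Q^*}$ is the greedy policy with respect to $Q^*$, so by the definition in \eqref{eq:greedy} and \eqref{eq:swtiching-matrix},
\begin{align*}
\Pi_{Q^*} Q^* \ge \Pi^{\pi} Q^* \quad\text{for every policy } \pi,
\end{align*}
element-wise; in particular $\Pi_{Q^*} Q^* \ge \Pi_{Q_k} Q^*$. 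Hence $(\Pi_{Q_k} - \Pi_{Q^*})Q^* \le 0$, and since $DP$ is entrywise nonnegative, $b_{Q_k} \le 0$, i.e., $-b_{Q_k} \ge 0$.

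Combining these facts, $A_{Q_k}\bigl((Q_k^U - Q^*) - (Q_k - Q^*)\bigr) \ge 0$ by the induction hypothesis and nonnegativity of $A_{Q_k}$, and $-b_{Q_k} \ge 0$ as just shown, so $Q_{k+1}^U - Q^* \ge Q_{k+1} - Q^*$, closing the induction. The only subtle point is the sign of $b_{Q_k}$, which relies on the greedy-policy optimality property $\Pi_{Q^*}Q^* \ge \Pi_{\pi}Q^*$; this is the mirror image of the inequality $\Pi_{Q_k}Q_k \ge \Pi_{Q^*}Q_k$ used in \cref{prop:lower-bound2}, and it is what makes $b_{Q_k}$ a nonpositive rather than a nonnegative perturbation when viewed against the original system.
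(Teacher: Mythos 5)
Your proof is correct and follows essentially the same route as the paper's: an induction whose step rests on the two facts that $A_{Q_k}$ is entrywise nonnegative and that $b_{Q_k}=\alpha\gamma DP(\Pi_{Q_k}-\Pi_{Q^*})Q^*\le 0$ by the greedy-policy optimality $\Pi_{Q^*}Q^*\ge \Pi_{Q_k}Q^*$. The only cosmetic difference is that you subtract the two systems and argue the difference stays nonnegative, whereas the paper chains the inequalities $Q_{k+1}-Q^*\le A_{Q_k}(Q_k-Q^*)+\alpha w_k\le A_{Q_k}(Q_k^U-Q^*)+\alpha w_k$ directly; the content is identical.
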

\begin{proof}
Suppose the result holds for some $k \ge 0$. Then,
\begin{align*}
(Q_{k+1}- Q^*)
=& A_{Q_k}(Q_k- Q^*)+b_{Q_k}+\alpha w_{k}\\
\leq& A_{Q_k}(Q_k- Q^*)+\alpha w_{k}\\
\leq &A_{Q_k}(Q_k^U - Q^*)+\alpha w_k\\
=& Q_{k+1}^U-Q^*,
\end{align*}
where we used the fact that $b_{Q_k}=D(\gamma P\Pi_{Q_k} Q^*-\gamma P\Pi_{Q^*} Q^*)\le D(\gamma P\Pi_{Q^*} Q^*-\gamma P\Pi_{Q^*} Q^*)=0$ in first inequality. The second inequality is due to the hypothesis $Q_k^U-Q^*\ge Q_k-Q^*$ and the fact that $A_{Q_k}$ is a positive matrix. The proof is completed by induction.
\end{proof}
\begin{remark}
In the original system~\eqref{eq:swithcing-system-form}, one can easily prove that\\ $b_{Q_k}:=\gamma DP(\Pi_{Q_k} - \Pi_{Q^*})Q^* \leq 0$ using the definition of $\Pi _{Q_k }$, i.e., $\Pi _{Q_k} Q^* \leq \Pi _{Q^*} Q^*$. Intuitively, removing this nonpositive bias term, $b_{Q_k}$, leads to the upper comparison system. The vector $_{Q_k}$ represents a portion of the gap between the original and upper systems incurred at a single time-step.
\end{remark}

Hence,  the trajectory of the stochastic linear switching system bounds that of the original system from above. Note  that the system matrix $A_{Q_k}$ switches according to the change of $Q_k$, which  depends probabilistically on $Q_k^U$. Therefore, if we take the expectation on both sides, it is not possible to separate $A_{Q_k}$ and the state $Q_k^U-Q^*$ unlike the lower comparison system, making it much harder to analyze the stability of the upper comparison system.

 To circumvent such a difficulty, we instead study an error system by subtracting the lower comparison system from the upper comparison system:
\begin{align}
Q_{k+1}^U- Q_{k+1}^L
=A_{Q_k}(Q_k^U-Q_k^L) +B_{Q_k}(Q_k^L-Q^*), \label{eq:error-system}
\end{align}
where
$$B_{Q_k}:=A_{Q_k}-A_{Q^*}=\alpha \gamma DP(\Pi_{Q_k}-\Pi_{Q^*}).$$
Here the stochastic noise $\alpha w_k$ is canceled out in the error system. Matrices $(A_{Q_k}, B_{Q_k})$ switches according to the external signal $Q_k$, and $Q_k^L-Q^*$ can be seen as an external disturbance.

The key insight is as follows: if we can prove the stability of the error system, i.e., $Q_k^U-Q_k^L \to 0$ as $k\to\infty$, then since $Q_k^L \to Q^*$ as $k \to \infty$, we have $Q_k^U \to Q^*$ as well.

\begin{example}\label{ex:2}
Consider~\cref{ex:1} again. The upper system in~\eqref{eq:upper-system} is the same as the original system, i.e.,
\[
Q^U_{k + 1}  - Q^*  = (1 - 0.1\alpha )(Q^U_k  - Q^* ).
\]
Therefore, with $Q_0 = Q_0^U$, the upper bound is tight in the sense that $Q^U_k - Q^* = Q_k - Q^*$ for all $k\geq 0$.
\end{example}

\begin{example}
Consider an MDP with ${\cal S}=\{1\}$, ${\cal A}=\{1,2\}$, $\gamma = 0.9$, where the reward is one when $a=1$ and zero otherwise. In this case, the optimal policy is defined with $\pi ^*(1) = 1$, and the corresponding optimal Q-function is $Q^* (s,1) = \frac{1}{{1 - \gamma }} = 10,Q^* (s,2) = \frac{\gamma }{{1 - \gamma }} = 9$. We consider the behavior policy $\beta ( \cdot |1) = \left[ {\begin{array}{*{20}c}
   {0.5} & {0.5}  \\
\end{array}} \right]^T$. Then, we have
\[
Q = \begin{bmatrix}
   {Q(1,1)}  \\
   {Q(1,2)}  \\
\end{bmatrix}, R = \begin{bmatrix}
   1  \\
   0  \\
\end{bmatrix}, P = \begin{bmatrix}
   1  \\
   1  \\
\end{bmatrix}, D = \begin{bmatrix}
   {0.5} & 0  \\
   0 & {0.5}  \\
\end{bmatrix}
\]
and $\Pi _Q  = \left[ {\begin{array}{*{20}c}
   1 & 0  \\
\end{array}} \right]$ if ${Q(1,1) \ge Q(1,2)}$ and $\Pi _Q  = \left[ {\begin{array}{*{20}c}
   0 & 1  \\
\end{array}} \right]$ otherwise. In this case,
\[
A_Q  = \begin{bmatrix}
   1 & 0  \\
   0 & 1  \\
\end{bmatrix} + \alpha \left( {0.9 \begin{bmatrix}
   {0.5}  \\
   {0.5}  \\
\end{bmatrix} \Pi _Q  - \begin{bmatrix}
   {0.5} & 0  \\
   0 & {0.5}  \\
\end{bmatrix}} \right)
\]
and
\[
b_Q  = 0.9\begin{bmatrix}
   {0.5}  \\
   {0.5}  \\
\end{bmatrix} \left( \Pi _Q  - \begin{bmatrix}
   1 & 0  \\
\end{bmatrix} \right)\begin{bmatrix}
   {10}  \\
   9  \\
\end{bmatrix}
\]
From the result, the gap, $Q^U_{k+1} - Q_{k+1}$, between the upper and original systems incurred at each time step is $b_Q  = \begin{bmatrix}
   0  \\
   0  \\
\end{bmatrix}$ when $Q(1,1)\geq Q(1,2)$, and $b_Q  =  - 0.9 \begin{bmatrix}
   {0.5}  \\
   {0.5}  \\
\end{bmatrix}$ when $Q(1,1)< Q(1,2)$. Similar results can be obtained for the lower system.
\end{example}

\subsection{Finite-time error bound of Q-learning}
In this subsection, we provide a finite-time error bound of Q-learning. We obtain the following main result.
\begin{theorem}\label{prop:expected-error-bound-Q-learning}
Under Assumptions 1-4, for any $N \geq 0$, we have the following error bound for Q-learning iterates:
\begin{align}
&{\mathbb E}[\|\tilde Q_N  - Q^* \|_\infty] \le \left( {\frac{{4\gamma d_{\max }  + d_{\min } (1 - \gamma )}}{{d_{\min }^{3/2} (1 - \gamma )^{5/2} }}|{\cal S}||{\cal A}|} \right)\sqrt {32\alpha  + \frac{1}{N}\frac{4}{\alpha }}
\label{eq:3}
\end{align}
where $\alpha \in (0,1)$ is the constant step-size and $\tilde Q_N=\frac{1}{N}\sum_{i=0}^{N-1} Q_k$.
\end{theorem}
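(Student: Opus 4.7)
The plan is to sandwich the Q-learning iterate by the two comparison systems and then reduce everything to quantities already controlled. Initialize $Q_0^L = Q_0^U = Q_0$ so that both hypotheses of the upper/lower comparison propositions hold with equality. Then $Q_k^L - Q^* \le Q_k - Q^* \le Q_k^U - Q^*$ element-wise, which, combined with the fact that $Q_k - Q_k^L \ge 0$ and $Q_k - Q_k^L \le Q_k^U - Q_k^L$, gives the path-wise triangle bound
\[
\|Q_k - Q^*\|_\infty \le \|Q_k^L - Q^*\|_\infty + \|Q_k^U - Q_k^L\|_\infty.
\]
Applying Jensen's inequality to $\tilde Q_N = \frac{1}{N}\sum_{k=0}^{N-1}Q_k$ and then taking expectations gives
\[
{\mathbb E}\|\tilde Q_N - Q^*\|_\infty \le \frac{1}{N}\sum_{k=0}^{N-1}{\mathbb E}\|Q_k^L-Q^*\|_\infty + \frac{1}{N}\sum_{k=0}^{N-1}{\mathbb E}\|Q_k^U-Q_k^L\|_\infty .
\]
The first sum is precisely what~\cref{thm:bound-lower-comparison} already bounds; call that bound $L$.

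The second sum I would control through the error system~\eqref{eq:error-system}, whose crucial feature is that the stochastic noise has canceled: $Q_{k+1}^U - Q_{k+1}^L = A_{Q_k}(Q_k^U - Q_k^L) + B_{Q_k}(Q_k^L - Q^*)$ with $Q_0^U - Q_0^L = 0$. Although $A_{Q_k}$ is random and correlated with $Q_k^L - Q^*$, the deterministic uniform bound $\|A_{Q_k}\|_\infty \le \rho$ from~\cref{lemma:max-norm-system-matrix} lets me unroll the recursion path-wise. For the forcing matrix, writing $B_{Q_k} = \alpha\gamma DP(\Pi_{Q_k} - \Pi_{Q^*})$ and noting that the rows of $DP$ are nonnegative with $\ell_1$-norm $d(s,a)$ while $\Pi_{Q_k}, \Pi_{Q^*}$ are row-wise one-hot, yields $\|B_{Q_k}\|_\infty \le 2\alpha\gamma d_{\max}$. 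Unrolling from zero gives
\[
\|Q_k^U - Q_k^L\|_\infty \le 2\alpha\gamma d_{\max}\sum_{j=0}^{k-1}\rho^{k-1-j}\|Q_j^L-Q^*\|_\infty.
\]
Taking expectations, averaging over $k$, swapping the order of summation, and bounding $\sum_{m\ge 0}\rho^m \le 1/(1-\rho) = 1/[\alpha d_{\min}(1-\gamma)]$ collapses this to $\frac{2\gamma d_{\max}}{d_{\min}(1-\gamma)}\cdot L$ (up to constants that may account for the factor $4$ versus $2$ in the stated prefactor).

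Adding the two contributions produces a multiplicative factor of the form $\bigl(d_{\min}(1-\gamma) + c\gamma d_{\max}\bigr)/[d_{\min}(1-\gamma)]$ in front of $L$, which after substituting the explicit $L$ from~\cref{thm:bound-lower-comparison}, replacing ${\mathbb E}\|Q_0 - Q^*\|_\infty^2$ by its crude upper bound coming from $\|Q_0\|_\infty \le 1$, $R_{\max}\le 1$, and~\cref{lemma:bounded-Q}, and collecting powers of $d_{\min}$ and $(1-\gamma)$ inside and outside the square root, reproduces the stated bound~\eqref{eq:3}. The main obstacle is the error-system step: a naive expectation argument is blocked because $A_{Q_k}$ switches with $Q_k$ and therefore correlates with the driving term $Q_k^L - Q^*$. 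The key trick that unblocks the analysis is that (i) the stochastic noise drops out when we subtract the two comparison systems, and (ii) the contraction $\|A_{Q_k}\|_\infty \le \rho$ holds uniformly on every realization, so the recursion can be handled pathwise via the telescoping/Fubini argument above before any expectation is taken.
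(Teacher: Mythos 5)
Your proposal is correct and follows essentially the same route as the paper: sandwich $Q_k$ between the two comparison systems, reduce $\|Q_k-Q^*\|_\infty$ to $\|Q_k^L-Q^*\|_\infty+\|Q_k^U-Q_k^L\|_\infty$, control the first term via \cref{thm:bound-lower-comparison}, and control the second via the noise-free error system~\eqref{eq:error-system} using the pathwise bounds $\|A_{Q_k}\|_\infty\le\rho$ and $\|B_{Q_k}\|_\infty\le 2\alpha\gamma d_{\max}$. The only (immaterial) difference is that you bound the averaged error-system norm by explicitly unrolling the recursion and swapping the order of summation, whereas the paper rearranges the one-step inequality into a telescoping sum; both yield the same $O\bigl(\tfrac{\gamma d_{\max}}{d_{\min}(1-\gamma)}\bigr)$ amplification factor, yours with a slightly smaller constant.
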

\begin{proof}
Taking the norm on both sides of the error system, we have for any $k\geq0$
\begin{align*}
&\|Q_{k+1}^U - Q_{k+1}^L\|_\infty\\
\le& \|A_{Q_k}\|_\infty \|Q_k^U-Q_k^L\|_\infty + \|B_{Q_k}\|_\infty \|Q_k^L-Q^*\|_\infty\\
\le& (\rho+\varepsilon) \|Q_k^U-Q_k^L\|_\infty + \|B_{Q_k}\|_\infty  \|Q_k^L- Q^*\|_\infty\\
\le & (\rho+\varepsilon) \|Q_k^U-Q_k^L\|_\infty + 2\alpha \gamma d_{\max} \|Q_k^L-Q^*\|_\infty.
\end{align*}
Here the last inequality uses the fact that
\begin{align*}
\|B_{Q_k}\|_\infty\le& \alpha\gamma d_{\max} \|P(\Pi_{Q_k}-\Pi_{Q^*})\|_\infty
\le 2\alpha\gamma d_{\max}.
\end{align*}

Rearranging terms leads to
\begin{align*}
&(1-\rho-\varepsilon) \|Q_k^U-Q_k^L\|_\infty\\
\le& \|Q_k^U-Q_k^L\|_\infty - \|Q_{k+1}^U-Q_{k+1}^L\|_\infty + 2\alpha \gamma d_{\max} \|Q_k^L-Q^*\|_\infty,\quad k \ge 0.
\end{align*}

Summing both sides from $k=0$ to $k=N-1$, dividing by $N$ and $2-\rho-\varepsilon>0$, and letting $Q_0^U= Q_0^L=Q_0$ lead to
\begin{align}
\sum_{k=0}^{N-1} {\frac{1}{N} \|Q_k^U-Q_k^L\|_\infty} \le \frac{2\alpha\gamma d_{\max}}{1-\rho-\varepsilon}\sum_{k=0}^{N-1} {\frac{1}{N}\|Q_k^L-Q^*\|_\infty}\label{eq:5}
\end{align}

Next, we will express the left-hand side in terms of $Q_k$. By triangle inequality, we have
\begin{align*}
\|Q_k-Q^*\|_\infty \le & \|Q^*- Q_k^L\|_\infty +\|Q_k-Q_k^L\|_\infty\\
\le & \|Q^*- Q_k^L\|_\infty +\|Q_k^U-Q_k^L\|_\infty.
\end{align*}
The second inequality is because
$$0 \le Q_k-Q_k^L\le Q_k^U-Q_k^L.$$
This leads to
\[
\|Q_k-Q^*\|_\infty - \|Q^*-Q_k^L\|_\infty \le \|Q_k^U-Q_k^L\|_\infty
\]

Combining this inequality with~\eqref{eq:5}, one gets
\begin{align*}
&\sum_{k=0}^{N-1} {\frac{1}{N}(\|Q_k-Q^*\|_\infty- \|Q^*-Q_k^L\|_\infty)}\\
\le& \frac{4\gamma d_{\max}}{d_{\min}(1 -\gamma)}\sum_{k=0}^{N-1} {\frac{1}{N}\|Q_k^L-Q^*\|_\infty}
\end{align*}
where we let $\varepsilon=\frac{1-\rho}{2}$ so that $\rho+\varepsilon= \frac{1+\rho}{2}$ and $\frac{1}{1-\rho-\varepsilon} = \frac{2}{1-\rho} = \frac{2}{\alpha d_{\min}(1-\gamma)}$. Rearranging terms again, taking the expectation on both sides, and combining it with~\eqref{eq:6}, we obtain
\begin{align*}
& \sum_{k=0}^{N-1}{\frac{1}{N}{\mathbb E}[\|Q_k-Q^*\|_\infty]}\\
\le& \frac{5d_{\max}}{d_{\min}(1-\gamma)}\sum_{k = 0}^{N-1} {\frac{1}{N} {\mathbb E}[\|Q_k^L-Q^*\|_\infty]}\\
\le & \frac{{5d_{\max } }}{{d_{\min } (1 - \gamma )}}\sqrt {\frac{{32\alpha |{\cal S}|^2 |{\cal A}|^2 }}{{d_{\min } (1 - \gamma )^3 }} + \frac{1}{N}\frac{{2|{\cal S}|^2 |{\cal A}|^2 }}{{\alpha d_{\min } (1 - \gamma )}} {\mathbb E}[\left\| {Q_0  - Q^* } \right\|_\infty ^2 ]}.
\end{align*}

From~\cref{lemma:bounded-Q}, $-{\bf 1}Q_{\max}\le Q_k \le {\bf 1}Q_{\max}$ holds for all $k\ge 0$. Applying $Q_{\max}= 1/(1-\gamma)$ and $\|Q_0\|_\infty\le 1$, the Jensen inequality, and after simplifications, we can obtain the desired conclusion.
\end{proof}
\begin{remark}
Lyapunov theory~\cite{chen2021lyapunov} has been applied for the lower comparison system, which is a linear time-invariant system. On the other hand, the techniques used for the error system between the upper and lower comparison systems more resemble those used in the optimization community rather than leveraging the nature of switching dynamical system. However, the switching system formulation captures essential behaviors of Q-learning algorithm, and itself is mainly used in combination with the lower comparison system in the overall derivation process.
\end{remark}

\subsection{Remarks}

\emph{Overestimation and maximization bias.}
Our analysis provides an intuitive explanation of the well-known overestimation phenomenon in Q-learning~\cite{hasselt2010double}:
$Q_k(s,a)$ tends to overestimate $Q^*(s,a)$ due to the maximization bias in the Q-learning updates. This becomes severe especially when the action-space is large. In particular, it can be problematic when the action spaces depending on states are heterogeneous  and the current estimate $Q_k$ is used for the exploration, e.g., the $\varepsilon$-greedy behavior policy. In this case, since $\argmax_{a\in {\cal A}}Q_k (s,a)$ tends to choose actions with larger maximization biases, thus degrading the quality of exploration and leading to slower convergence. Moreover, the overestimation error could be amplified at each iteration $k$ when it passes through the max operator.

In fact, assuming that the initial $Q_0(s,a)- Q^*(s,a)$ is a zero mean random variable, namely, ${\mathbb E}[Q_0 - Q^*] = 0$, we can easily see through our analysis that
\[{\mathbb E}[Q_k - Q^*]\geq 0, \quad \forall k\geq0.\]
This is because the lower comparison system (which is a stochastic linear system) satisfies that
\[
{\mathbb E}[Q_k^L-Q^*] = A_{Q^*}^k {\mathbb E}[Q_0^L-Q^*] + \sum_{i=0}^{k-1} {A_{Q^*}^{k-1-i}\alpha {\mathbb E}[w_i]}=0.
\]
provided that $Q_0^L = Q_0$, namely, there exists no biases in the lower system state. On the other hand, since $Q_k^L-Q^*\leq Q_k - Q^*$, ${\mathbb E}[Q_k - Q^*]\geq0$ holds. For $(s,a)$ such that $Q_k^L\leq Q_k$ holds strictly, then ${\mathbb E}[Q_k(s,a)]>Q^*(s,a)$, which explains the overestimation phenomenon.

\subsection{Sample complexity}

Based on the finite-time error bound on the Q-learning iterates in~\cref{prop:expected-error-bound-Q-learning},
we can derive an upper bound on the sample or iteration complexity of Q-learning: to find an $\varepsilon$-optimal solution such that ${\mathbb E}[\|\tilde Q_N-Q^*\|_\infty]<\varepsilon$, we need  at most
\begin{align*}
{\cal O}\left( \frac{d_{\max}^4 |{\cal S}|^4 |{\cal A}|^4}{\varepsilon^4\delta^4 d_{\min}^6 (1-\gamma)^{10}} \right)
\end{align*}
samples. If the state-action pair is sampled uniformly from ${\cal S}\times {\cal A}$, then $d(s,a) = \frac{1}{|{\cal S}||{\cal A}|},\forall (s,a) \in {\cal S} \times {\cal A}$ and $d_{\min}=d_{\max}=\frac{1}{|{\cal S}||{\cal A}|}$.
In this case, the sample complexity becomes ${\cal O}\left(\frac{|{\cal S}|^6 |{\cal A}|^6}{\varepsilon^4 \delta^4 (1-\gamma)^{10}} \right)$. The proof is given in Appendix~\ref{app:sample_complexity}.

The finite-time analysis of asynchronous Q-learning with constant step-size was first considered in~\cite{beck2012error} and has been recently studied in~\cite{li2020sample} and the concurrent work~\cite{chen2021lyapunov}. Based on the cover time assumption,~\cite{beck2012error} provides $\tilde {\cal O}\left( {\frac{t_{\rm cover}^3 |{\cal S}||{\cal A}|}{(1-\gamma)^5 \varepsilon^2}} \right)$, where $t_{\rm cover}$ is the cover time and $\tilde {\cal O}$ ignores the polylogarithmic factors. The results in~\cite{li2020sample} provide $\tilde {\cal O}\left( \frac{1}{d_{\min}(1-\gamma)^5 \varepsilon^2} + \frac{t_{\rm mix}}{d_{\min}(1-\gamma)}\right)$ with a single Markovian trajectory, where $t_{\rm mix}$ is the mixing time. Note that the mixing time and cover time assumptions are adopted in~\cite{li2020sample}. The complexity $\tilde {\cal O}\left(\frac{1}{d_{\min}^3(1-\gamma)^5\varepsilon^2}\right)$ is given in~\cite{chen2021lyapunov} with a single Markovian trajectory. Note that the bounds in~\cite{chen2021lyapunov} and~\cite{beck2012error} are for the expected error bounds, and those in~\cite{li2020sample} are for the concentration error bounds. Besides,~\cite{qu2020finite} offers a sharper bound using a diminishing step-size. Based on the analysis, we summarize advantages and limitations of the proposed approach. A limitation of the proposed method lies in that the corresponding sample complexity is not tighter than the existing approaches. On the other hand, the main advantage is the proposition of unique switching system and control perspectives, which inherit simplicity and elegance. It also provides additional insights on Q-learning.

\section{Example}\label{sec:example}

Consider an MDP with ${\cal S}=\{1,2\}$, ${\cal A}=\{1,2\}$, $\gamma = 0.9$,
\begin{align*}
&P_1=\begin{bmatrix}
   0.3863 & 0.6137\\
   0.3604 & 0.6396 \\
\end{bmatrix},\quad P_2=\begin{bmatrix}
   0.8639 & 0.1361 \\
   0.7971 & 0.2029\\
\end{bmatrix},
\end{align*}
and the reward function
\[
r(1,1)=-3,\quad r(2,1)=1,\quad r(1,2)=2,\quad r(2,2)=-1.
\]
Actions are sampled using the behavior policy $\beta$
\begin{align*}
&{\mathbb P}[a_k = 1|s_k = 1] = 0.2,\quad {\mathbb P}[a_k = 2|s_k = 1] = 0.8,\\
&{\mathbb P}[a_k = 1|s_k = 2] = 0.7,\quad {\mathbb P}[a_k = 2|s_k = 2] = 0.3,
\end{align*}
and the states are sampled according to the distribution
\[
{\mathbb P}[s_k=1] = 0.2,\quad {\mathbb P}[s_k=2]=0.8,\quad \forall k \ge 0.
\]

Simulated trajectories of the switching system model of Q-learning (black solid line) with $\alpha = 0.002$, the lower comparison system (blue solid line), and the upper comparison system (red solid line) are depicted in~\cref{fig:1}. It demonstrates that the state of the switching model of Q-learning, $Q_k-Q^*$, is underestimated by the lower comparison system's state $Q_k^L-Q^*$ and overestimated by the upper comparison system's state $Q_k^U-Q^*$.

The evolution of the error between the upper and lower comparison systems are depicted in~\cref{fig:2}. It also demonstrates that the state of the error, $Q_k^U-Q_k^L$, converges to the origin. The simulation study empirically proves that the bounding rules that we predicted theoretically hold.

\begin{figure*}[ht]
\centering\includegraphics[width=17cm,height=11cm]{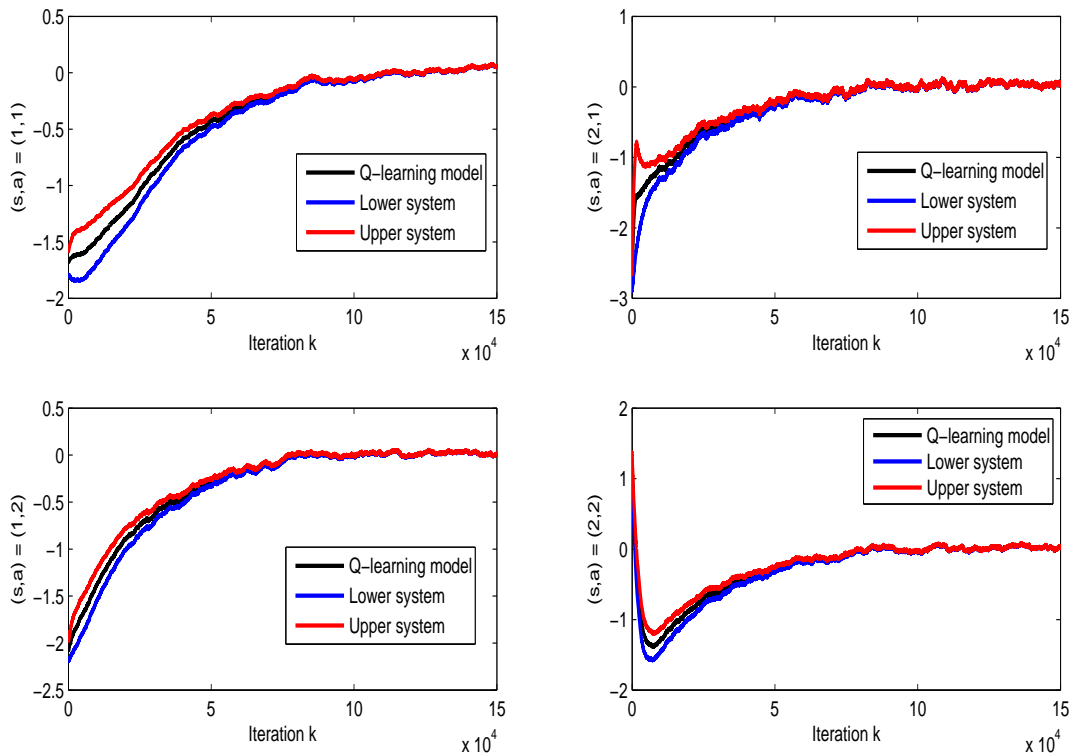}
\caption{Evolution of $Q_k-Q^*$ (black solid lines), lower comparison system $Q_k^L-Q^*$ (blue solid lines), and upper comparison system $Q_k^U-Q^*$ (red solid lines) with step-size $\alpha = 0.002$. }\label{fig:1}
\end{figure*}
\begin{figure*}[ht]
\centering\includegraphics[width=17cm,height=11cm]{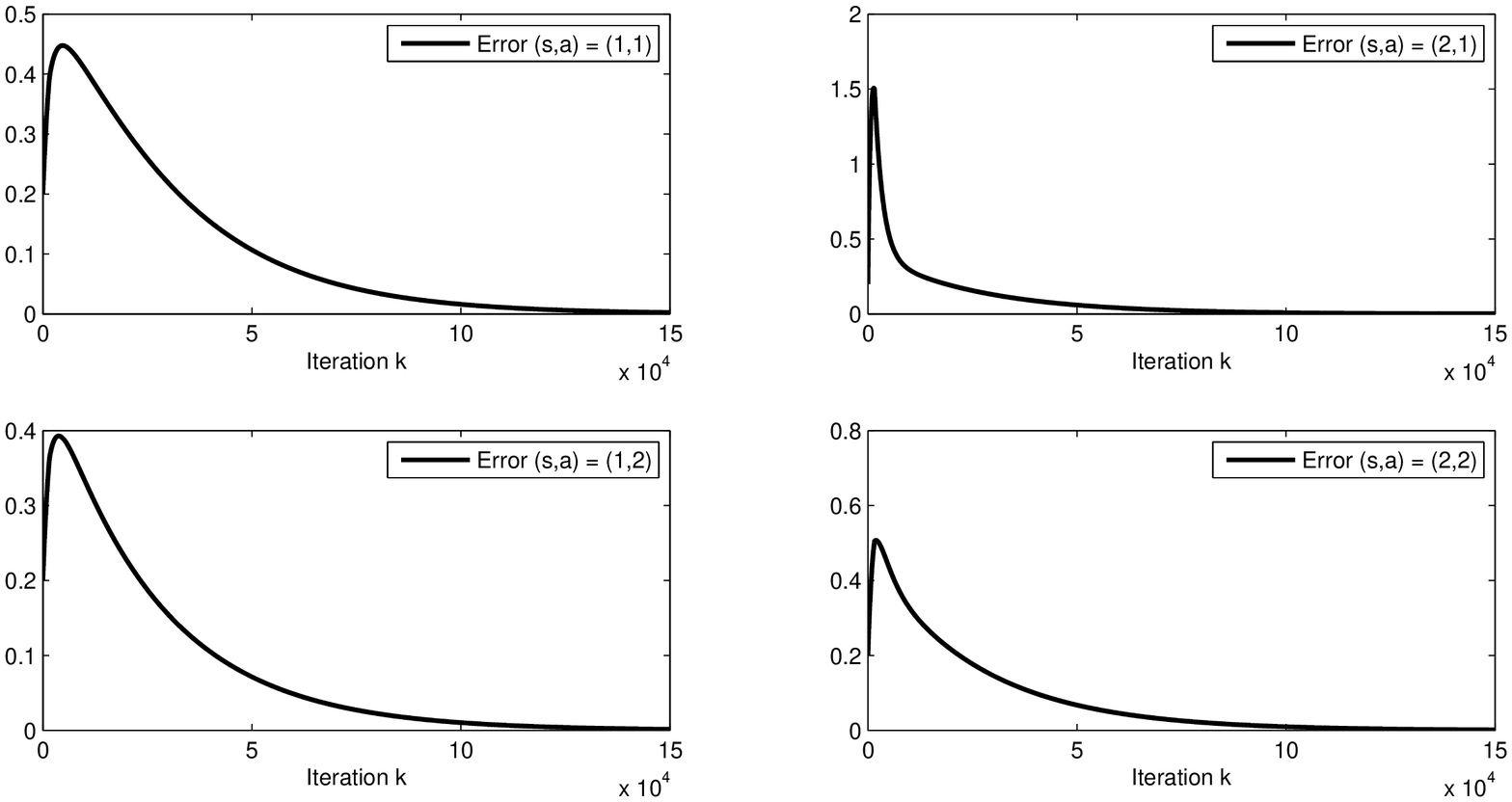}
\caption{Evolution of error $Q_k^U-Q_k^L$ (black solid lines) with step-size $\alpha = 0.002$. }\label{fig:2}
\end{figure*}

Under the same conditions, the simulation results with the step-size $\alpha = 0.9$ are given in~\cref{fig:3} and~\cref{fig:4}.
\cref{fig:3} shows that the evolution of $Q_k-Q^*$ (black solid lines), lower comparison system $Q_k^L-Q^*$ (blue solid lines), and upper comparison system $Q_k^U-Q^*$ (red solid lines) are less stable with a larger step-size. \cref{fig:4} also shows large fluctuation of the error $Q_k^U-Q_k^L$ with $\alpha = 0.9$.

\begin{figure*}[t]
\centering\includegraphics[width=17cm,height=11cm]{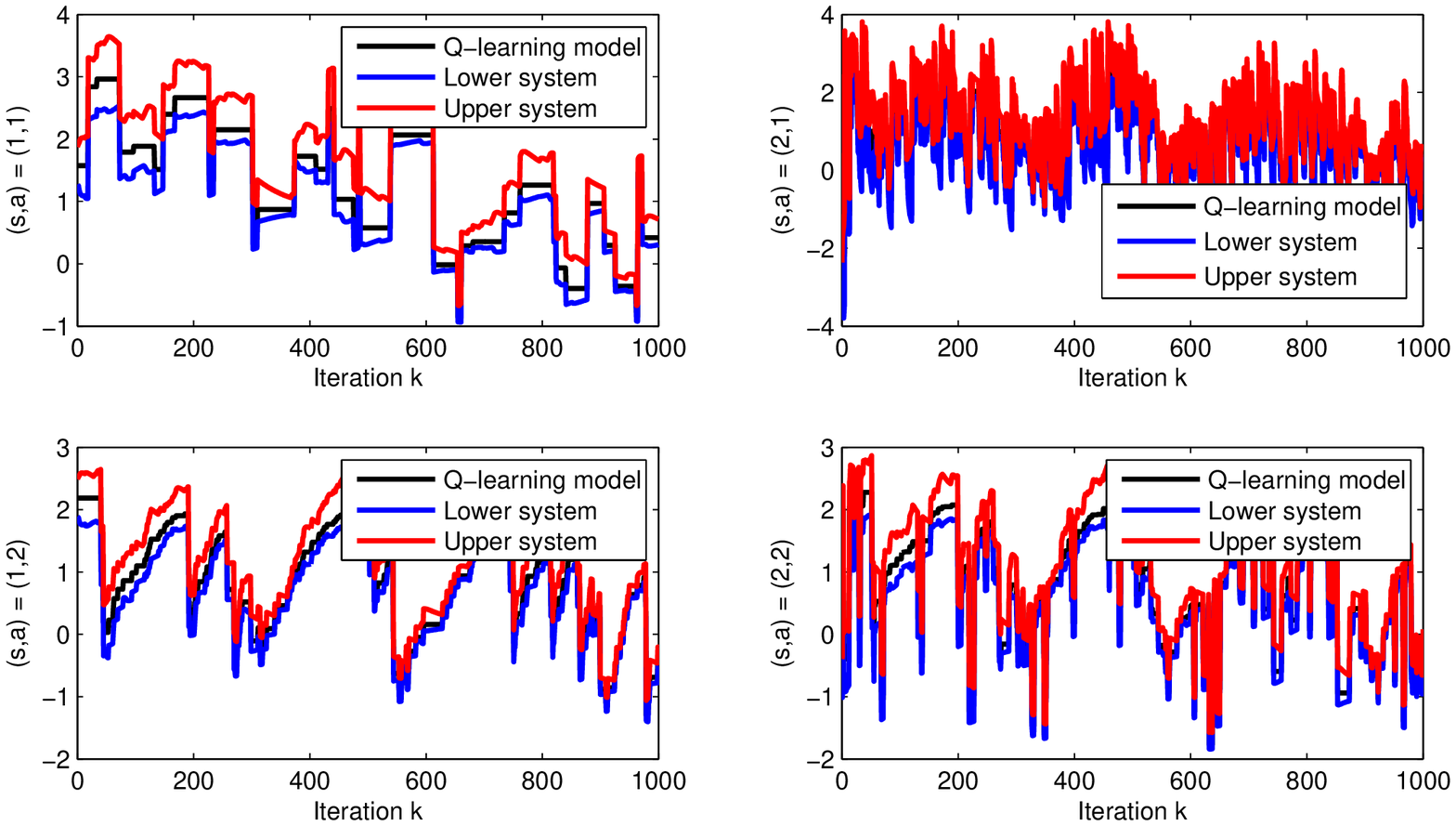}
\caption{Evolution of $Q_k-Q^*$ (black solid lines), lower comparison system $Q_k^L-Q^*$ (blue solid lines), and upper comparison system $Q_k^U-Q^*$ (red solid lines) with step-size $\alpha = 0.9$. }\label{fig:3}
\end{figure*}
\begin{figure*}[t]
\centering\includegraphics[width=17cm,height=11cm]{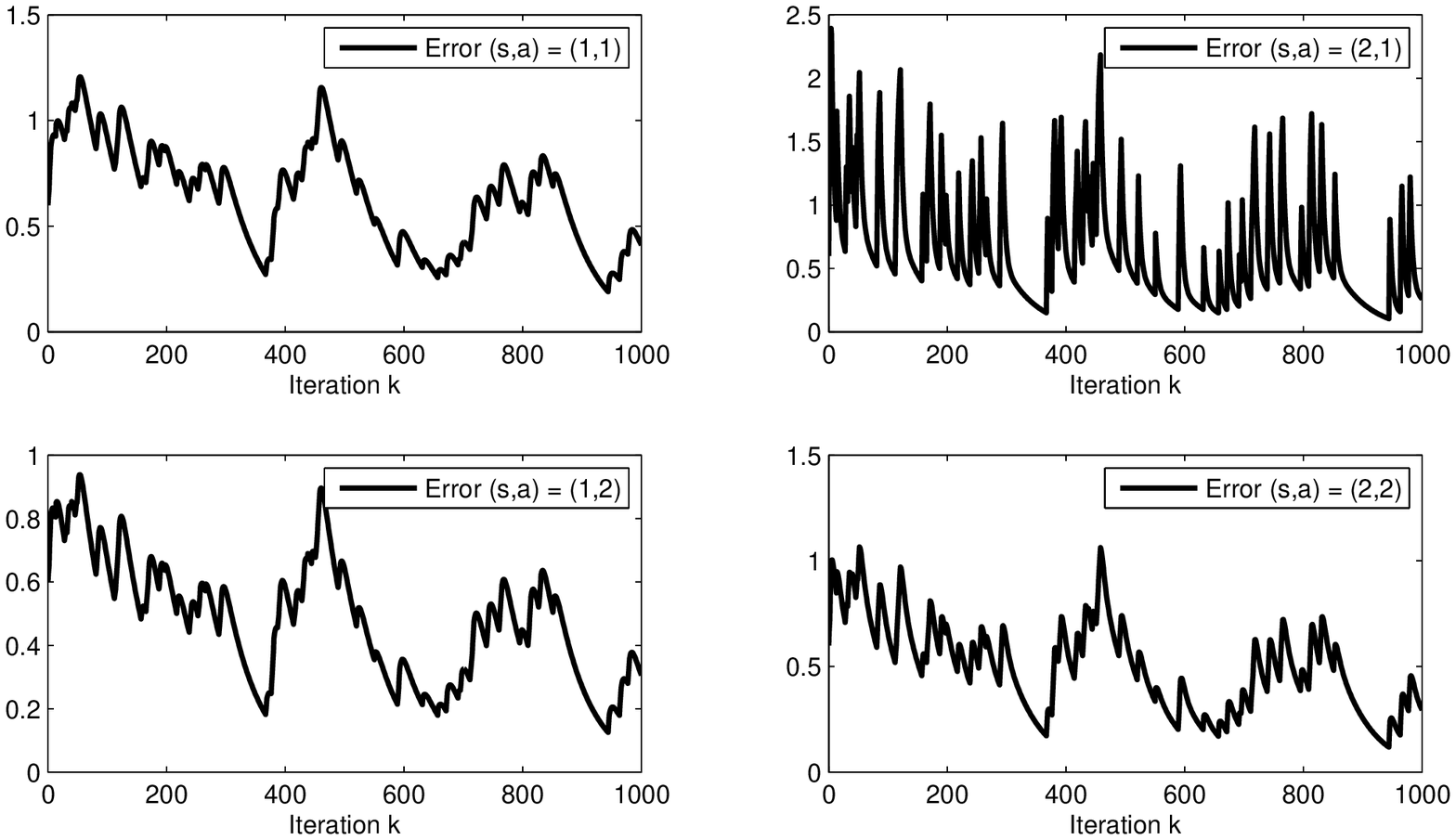}
\caption{Evolution of error $Q_k^U-Q_k^L$ (black solid lines) with step-size $\alpha = 0.9$. }\label{fig:4}
\end{figure*}

\section{Conclusion}\label{sec:conclusion}
In this paper, we introduced a novel control-theoretic framework based on  discrete-time switching systems to derive finite-time error bounds of Q-learning algorithm. By sandwiching the dynamics of asynchronous Q-learning with two simpler stochastic (switched) linear systems, a new finite-time analysis of the Q-learning can be easily derived. We believe it is important to emphasize that the proposed control-theoretic analysis can be viewed as a new analysis which gives additional insights on Q-learning rather than a replacement or improvement of existing convergence rate analysis. The proposed analysis has simplicity, novelty, and more intuition. We expect that, such control-theoretic analysis could further stimulate the synergy between control theory and reinforcement learning, and open up opportunities to the design of new reinforcement learning algorithms and refined analysis for Q-learning. Moreover, our approach based on the comparison systems could be of independent interest to the finite-time stability analysis of more general switching systems.

As a promising next step, we plan to further tighten the analysis and extend it to more general Markovian settings.
Our analysis framework can potentially be applied to derive finite-time error bounds for other variants of Q-learning, such as the double Q-learning~\cite{hasselt2010double}, averaging Q-learning~\cite{lee2020unified}, speedy Q-learning~\cite{azar2011speedy}, and multi-agent Q-learning~\cite{kar2013cal}, as well as their function approximation counterparts. We will leave this for future investigation.


\bibliographystyle{IEEEtran}
\bibliography{reference}

\section{Appendix}

\subsection{Proof of Proposition~\ref{prop:Lyapunov-theorem}}\label{app:prop-Lyapunov}

\begin{proof}
For simplicity, denote $A=A_{Q^*}$. Consider matrix $M$ such that
\begin{align}
M = \sum_{k=0}^\infty {\left({\frac{1}{\rho+\epsilon} }\right)^{2k} (A^k)^T A^k}. \label{eq:4}
\end{align}
Noting that
\begin{align*}
(\rho+\epsilon)^{-2} A^T M A + I=& \frac{1}{(\rho+\epsilon)^2}A^T \left(\sum_{k=0}^\infty {\left( {\frac{1}{\rho+\epsilon}} \right)^{2k} (A^k)^T A^k }\right)A + I\\
=& M,
\end{align*}
we have
\[
(\rho+\epsilon)^{-2} A^T M A + I = M.
\]
resulting in the desired conclusion. Next, it remains to prove the existence of $M$ by proving its boundedness. Taking the norm on $M$ leads to
\begin{align*}
\left\| P \right\|_2  =& \left\| {I + (\rho  + \varepsilon )^{ - 2} A^T A + (\rho  + \varepsilon )^{ - 4} (A^2 )^T A^2  +  \cdots } \right\|_2\\
\le& \left\| I \right\|_2  + (\rho  + \varepsilon )^{ - 2} \left\| {A^T A} \right\|_2  + (\rho  + \varepsilon )^{ - 4} \left\| {(A^2 )^T A^2 } \right\|_2  +  \cdots\\
=& \left\| I \right\|_2  + (\rho  + \varepsilon )^{ - 2} \left\| A \right\|_2^2  + (\rho  + \varepsilon )^{ - 4} \left\| {A^2 } \right\|_2^2  +  \cdots\\
=& 1 + |{\cal S}||{\cal A}|(\rho  + \varepsilon )^{ - 2} \left\| A \right\|_\infty ^2  + |S||A|(\rho  + \varepsilon )^{ - 4} \left\| {A^2 } \right\|_\infty ^2  +  \cdots\\
=& 1 - |{\cal S}||{\cal A}| + \frac{{|{\cal S}||{\cal A}|}}{{1 - \left( {\frac{\rho }{{\rho  + \varepsilon }}} \right)^2 }}.
\end{align*}

Finally, we prove the bounds on the maximum and minimum eigenvalues.
From the definition~\eqref{eq:4}, $M \succeq I$, and hence $\lambda_{\min}(M)\ge 1$. On the other hand, one gets
\begin{align*}
\lambda_{\max}(M)=& \lambda_{\max}(I + (\rho+\epsilon)^{-2} A^T A\\
&+ (\rho+\epsilon)^{-4}(A^2)^T A^2+\cdots)\\
\le& \lambda_{\max}(I) + (\rho+\epsilon)^{-2} \lambda_{\max}(A^T A)\\
&+ (\rho+\epsilon)^{-4}\lambda_{\max}((A^2)^T A^2 )+\cdots\\
=& \lambda_{\max}(I) + (\rho+\epsilon)^{-2} \|A\|_2^2 + (\rho+\epsilon)^{-4} \|A^2\|_2^2  +  \cdots\\
\le& 1 + |{\cal S}||{\cal A}|(\rho+\epsilon)^{-2} \|A\|_\infty^2 \\
& + |{\cal S}||{\cal A}|(\rho+\epsilon)^{-4} \|A^2\|_\infty ^2 + \cdots\\
\le& \frac{|{\cal S}||{\cal A}|}{1 - \left(\frac{\rho}{\rho+\epsilon} \right)^2}.
\end{align*}
The proof is completed.
\end{proof}

\subsection{Sample Complexity}\label{app:sample_complexity}

\begin{proposition}[Sample complexity]
To achieve
\[
\|\tilde Q_N-Q^*\|_\infty<\varepsilon
\]
with probability at least $1-\delta$, we need the number of samples/iterations at most
\[
{\cal O}\left( \frac{d_{\max}^4 |{\cal S}|^4 |{\cal A}|^4}{\varepsilon^4 \delta^4 d_{\min}^6 (1-\gamma)^{10}} \right)
\]
\end{proposition}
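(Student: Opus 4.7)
The plan is to derive the sample complexity by combining the expected-error bound of \cref{prop:expected-error-bound-Q-learning} with Markov's inequality, and then optimizing the constant step-size $\alpha$ so that both terms inside the square root are balanced.

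First I would convert the in-expectation bound to a high-probability bound. By Markov's inequality, for any $\varepsilon > 0$,
\[
{\mathbb P}[\|\tilde Q_N - Q^*\|_\infty \ge \varepsilon ] \;\le\; \frac{{\mathbb E}[\|\tilde Q_N - Q^*\|_\infty]}{\varepsilon}.
\]
Thus it suffices to find $(N,\alpha)$ guaranteeing ${\mathbb E}[\|\tilde Q_N - Q^*\|_\infty] \le \varepsilon\delta$; this yields failure probability at most $\delta$.

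Next, introduce the shorthand $C := \frac{4\gamma d_{\max} + d_{\min}(1-\gamma)}{d_{\min}^{3/2}(1-\gamma)^{5/2}}|{\cal S}||{\cal A}|$, which is of order ${\cal O}\!\left(\frac{d_{\max}|{\cal S}||{\cal A}|}{d_{\min}^{3/2}(1-\gamma)^{5/2}}\right)$ since $d_{\max} \le 1$ and we absorb the additive constant into a multiplicative one. Then \cref{prop:expected-error-bound-Q-learning} reads
\[
{\mathbb E}[\|\tilde Q_N - Q^*\|_\infty] \;\le\; C\sqrt{32\alpha + \tfrac{4}{\alpha N}}.
\]
Squaring the target inequality, it is enough to enforce $32\alpha + \tfrac{4}{\alpha N} \le \tfrac{\varepsilon^2\delta^2}{C^2}$. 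I would split the budget equally between the two terms, requiring each to be at most $\tfrac{\varepsilon^2\delta^2}{2C^2}$. The first requirement fixes the step-size $\alpha \le \tfrac{\varepsilon^2\delta^2}{64C^2}$, which is in $(0,1)$ for small enough $\varepsilon\delta$ as required by \cref{assumption:step-size}. Substituting this choice of $\alpha$ into the second requirement yields
\[
N \;\ge\; \frac{8C^2}{\alpha\,\varepsilon^2\delta^2} \;=\; \frac{512\,C^4}{\varepsilon^4\delta^4}.
\]

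Finally, plugging in $C^4 = {\cal O}\!\left(\frac{d_{\max}^4|{\cal S}|^4|{\cal A}|^4}{d_{\min}^6(1-\gamma)^{10}}\right)$ delivers the claimed iteration complexity
\[
N \;=\; {\cal O}\!\left(\frac{d_{\max}^4|{\cal S}|^4|{\cal A}|^4}{\varepsilon^4\delta^4\,d_{\min}^6(1-\gamma)^{10}}\right),
\]
and since one sample is drawn per iteration, this coincides with the sample complexity. The main subtlety (rather than a hard obstacle) is the step-size tuning: because the expected bound does not vanish as $N\to\infty$ for a fixed $\alpha$, one must simultaneously shrink $\alpha$ with $\varepsilon\delta$; the balanced split above is what produces the $\varepsilon^{-4}\delta^{-4}$ scaling, and the $\delta^{-4}$ dependence (as opposed to the $\mathrm{polylog}(1/\delta)$ one would obtain from a concentration argument) is an unavoidable consequence of relying on Markov's inequality alone.
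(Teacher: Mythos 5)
Your proposal is correct and follows essentially the same route as the paper: apply Markov's inequality to the expected-error bound of \cref{prop:expected-error-bound-Q-learning}, split the two terms under the square root evenly, choose $\alpha = \Theta(\varepsilon^2\delta^2/C^2)$ to control the first, and then solve for $N$ in the second, which yields the $C^4/(\varepsilon^4\delta^4)$ scaling. The only cosmetic difference is that the paper first loosens the bound via subadditivity of the square root into a sum of two square-root terms before balancing them, whereas you square the target inequality and balance the terms inside the radical; the constants and the final complexity are the same.
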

\begin{proof}
For convenience, we first find a simplified overestimate on the right-hand side of~\eqref{eq:3} as
\begin{align*}
&{\mathbb E}[\|\tilde Q_N -Q^*\|_\infty]\nonumber\\
\le& \frac{20d_{\max} |{\cal S}||{\cal A}|}{d_{\min}(1-\gamma)^2}\left( \sqrt{\frac{2\alpha}{d_{\min}(1-\gamma)}} + \sqrt{\frac{1}{N}\frac{2}{\alpha d_{\min}(1-\gamma)}} \right) = :C
\end{align*}
Applying the Markov inequality
\[
{\mathbb P}[\|\tilde Q_N-Q^*\|_\infty\ge \varepsilon] \le \frac{C}{\varepsilon },
\]
we conclude that $\| \tilde Q_N  - Q^* \|_\infty < \varepsilon$ with probability at least $1-\delta$, i.e.,
\begin{align*}
{\mathbb P}[\|\tilde Q_N - Q^*\|_\infty < \varepsilon ]\ge 1 - \delta,
\end{align*}
where
\begin{align*}
\delta  = \frac{1}{\varepsilon }\frac{{20d_{\max } |S||A|}}{{d_{\min } (1 - \gamma )^2 }}\left( {\sqrt {\frac{{2\alpha }}{{d_{\min } (1 - \gamma )}}}  + \sqrt {\frac{1}{N}\frac{2}{{\alpha d_{\min } (1 - \gamma )}}} } \right)
\end{align*}
$N$, and $\alpha$ are appropriately chosen so that $\delta\in (0,1)$. One concludes that to satisfy $\|Q_N-Q^*\|_\infty<\varepsilon$ with probability at least $1-\delta$, we should have
\begin{align*}
\delta  \ge& \underbrace {\frac{1}{\varepsilon}\frac{20d_{\max} |{\cal S}||{\cal A}|}{d_{\min} (1-\gamma)^2}\sqrt \frac{2\alpha}{d_{\min}(1-\gamma)}}_{\Phi_1} + \underbrace {\frac{1}{\varepsilon}\frac{20d_{\max} |{\cal S}||{\cal A}|}{d_{\min}(1-\gamma)^2}\sqrt \frac{1}{N}\frac{2}{\alpha d_{\min}(1-\gamma)}}_{\Phi_2}
\end{align*}
which is achieved if $\delta/2\ge\Phi_1$ and $\delta/2\ge\Phi_2$.

The first inequality is satisfied if
\begin{align}
\alpha  = \frac{\delta^2\varepsilon^2}{8}\frac{d_{\min}^3 (1-\gamma)^5}{400d_{\max }^2 |{\cal S}|^2 |{cal A}|^2}
\label{eq:7}
\end{align}
and the second inequality holds if
\begin{align*}
N \ge \frac{3200d_{\max}^2 |{\cal S}|^2 |{\cal A}|^2}{\alpha\varepsilon^2 \delta^2 d_{\min}^3 (1-\gamma)^5}
\end{align*}

Plugging~\eqref{eq:7} into the last inequality, we can arrive at the desired conclusion.
\end{proof}

\end{document}